\numberwithin{equation}{section}
\DeclareRobustCommand{\rchi}{{\mathpalette\irchi\relax}}
\newcommand{\irchi}[2]{\raisebox{\depth}{$#1\chi$}}
\theoremstyle{plain} 
\newtheorem{theorem}{\indent\sc Theorem}[section]
\newtheorem{lemma}[theorem]{\indent\sc Lemma}
\newtheorem{corollary}[theorem]{\indent\sc Corollary}
\newtheorem{proposition}[theorem]{\indent\sc Proposition}
\theoremstyle{definition} 
\title[Variational problems of total mean curvature surfaces and applications]{\sc Total mean curvature surfaces in the product space $\mathbb{S}^{n}\times\mathbb{R}$ and applications}
\author[A.L. Albujer, S.F. da Silva and F.R. dos Santos]{Alma L. Albujer, Sylvia F. da Silva and F\'{a}bio R. dos Santos}
\address{
Departamento de Matem\'aticas \\
Universidad de C\'ordoba \\
14071 Campus de Rabanales, C\'ordoba\\
Spain
}
\email{alma.albujer@uco.es}
\address{
Departamento de matem\'atica\\
Universidade Federal de Pernambuco \\
50.740-560, Recife, Pernambuco \\
Brazil
}
\email{fabio.reis@ufpe.br}
\email{sylvia.ferreira@ufpe.br}
\keywords{$\mathcal{H}$-surface; product space; minimal surface; Clifford torus; Veronese surface}
\subjclass[2020]{Primary 53C42; Secondary 53A10, 53C30.}
\thanks{$^{\ast}$Corresponding author}
\begin{document}

\maketitle


\begin{abstract}
The total mean curvature functional for submanifolds into the Riemannian product space $\mathbb{S}^n\times\mathbb{R}$ is considered and its first variational formula is presented. Later on, two second order differential operators are defined and a nice integral inequality relating both of them is proved. Finally we prove our main result: an integral inequality for closed stationary $\mathcal{H}$-surfaces in $\mathbb{S}^n\times\mathbb{R}$, characterizing the cases where the equality is attained.
\end{abstract}

\maketitle

\section{Introduction}

Along the last decades, integral inequalities have become an interesting tool for the study of rigidity results for closed submanifolds immersed in Riemannian spaces. In this setting, we point out that the first contribution in this thematic was given at 60's by Simons~\cite{Simons:68} who computed the Laplacian of the squared norm of the second fundamental form $\sigma$ of a minimal submanifold in the sphere. As a consequence, he showed that if $\Sigma^{m}$ is a closed minimal submanifold in $\mathbb{S}^{n}$, the following integral inequality holds:
\begin{equation}\label{Simons}
	\int_{\Sigma}|\sigma|^{2}\left(|\sigma|^{2}-c(n,m)\right)d\Sigma\geq0\quad\mbox{with}\quad c(n,m)=\frac{m(n-m)}{2(n-m)-1},
\end{equation}
where $d\Sigma$ is the volume element on $\Sigma^{m}$. Simons noticed that the inequality~\eqref{Simons} provides a natural gap concerning the size of the squared norm of the second fundamental form. Indeed, if the second fundamental form satisfies $0\leq|\sigma|^{2}\leq c(n,m)$ then either $|\sigma|^{2}=0$, and $\Sigma^{m}$ is totally geodesic, so a sphere $\mathbb{S}^m$, or $|\sigma|^{2}=c(n,m)$. This last equality was studied by Chern, do Carmo and Kobayashi~\cite{do Carmo:70}, who concluded that in this case $\Sigma^m$ is necessarily a Cliffod torus or a Veronese surface in $\mathbb{S}^4$. It is worth pointing out that the case of codimension 1 was also studied simultaneous and independently by Lawson~\cite{Lawson:69}. Nowadays, the inequality~\eqref{Simons} is known as the Simons integral inequality.

On the other hand, an interesting line of research is to study which submanifolds are critical points of certain geometric functionals. In this scenario, let us highlight three classical different functionals. Firstly, Chen considered in~\cite{Chen:74} the following functional for closed surfaces $\Sigma^2$ in $\mathbb{R}^3$:
\begin{equation}\label{Willmore-tilde}
	\widetilde{\mathcal{W}}(\Sigma)=\dfrac{1}{2}\int_{\Sigma}|\phi|^{2}d\Sigma=\int_{\Sigma} (H^2-K) d\Sigma,
\end{equation}
where $\phi=A-H I$ is the umbilicity tensor of $\Sigma$, $A$ denotes the shape operator of $\Sigma$ and $H$ and $K$ stand for the mean and Gaussian curvature of $\Sigma$, respectively. Closely related to~\eqref{Willmore-tilde} we can consider the well-known Willmore energy or Willmore functional given by:
\begin{equation}\label{Willmore}
	\mathcal{W}(\Sigma)=\int_{\Sigma} H^2 d\Sigma.
\end{equation}
In fact, because of the classical Gauss-Bonnet theorem, both functionals $\widetilde{\mathcal{W}}$ and $\mathcal{W}$ have the same critical points in the set of closed surfaces in $\mathbb{R}^3$. Associated to~\eqref{Willmore}, there is the famous Willmore conjecture, proposed in 1965 by Willmore~\cite{Willmore:65} and solved in $2014$ by Marques and Neves~\cite{Marques:14}, which guarantees that the value of $\mathcal{W}(\Sigma)$ is at least $2\pi^{2}$ when $\Sigma^{2}$ is an immersed torus into $\mathbb{R}^{3}$. 

Finally,  another interesting functional, the {\em total mean curvature} functional, was introduced by Chen~\cite{Chen:71} for any closed submanifold $\Sigma^{m}$ in the Euclidean space $\mathbb{R}^n$:
\begin{equation}\label{total}
	\mathcal{H}(\Sigma)=\int_{\Sigma} H^{m}d\Sigma.
\end{equation}
Chen proved that $\mathcal{H}$ is bounded from below by the volume of the unit $m$-sphere, being the equality attained precisely when the submanifold is the unit $m$-sphere. The total mean curvature functional has also been considered for submanifolds in other ambient spaces. In the case of closed submanifolds in the sphere $\mathbb{S}^n$, $\mathcal{H}$ is bounded from below by zero and the equality is attached at all closed minimal submanifolds of $\mathbb{S}^n$. Considering the variational problem associated to such functional, it is said that a submanifold $\Sigma^{m}$ is an {\em$\mathcal{H}$-submanifold} if it is a stationary point for the functional $\mathcal{H}$. In this context, Guo and Yin~\cite{Guo:16} established an integral inequality relating the total umbilicity tensor and the Euler characteristic $\chi(\Sigma)$ of a closed $\mathcal{H}$-surface $\Sigma^{2}$ immersed in $\mathbb{S}^{n}$:
\begin{equation}\label{eq:Guorelation}
	\int_{\Sigma}\left\{|\phi|^{2}\left(1-\left(2-\frac{1}{n-2}\right)|\phi|^{2}\right)+2\right\}d\Sigma\leq4\pi\rchi(\Sigma),
\end{equation}
being the equality achieved if and only if $\Sigma^{2}$ is either a totally geodesic $2$-sphere, a Clifford torus in $\mathbb{S}^{3}$ or a Veronese surface in $\mathbb{S}^{4}$.

Considering more general ambient spaces, recently the first and third authors computed in~\cite{Albujer:22} the Euler-Lagrange equation of a suitable Willmore functional for closed immersed surfaces in an homogeneous space $\mathbb{E}^{3}(\kappa,\tau)$. As an application, they developed a Simons type integral inequality for such surfaces, characterizing the surfaces for which the equality holds as Clifford or Hopf tori of the ambient space. Furthermore, recently the last two authors obtained an integral inequality for closed immersed submanifolds $\Sigma^{m}$ into the product space $\mathbb{S}^{n}\times\mathbb{R}$ having parallel normalized mean curvature vector field,~\cite{dos Santos:21.1,dos Santos:22}. They also showed that, in this case, the equality is attained if and only if $\Sigma^{m}$ is isometric to either a totally umbilical sphere, or to a certain family of Clifford tori in a totally geodesic sphere $\mathbb{S}^{m+1}$ of $\mathbb{S}^{n}$. 

In the spirit of the previous results, we will obtain the Euler-Lagrange equation of the total mean curvature functional for closed immersed surfaces into the product space $\mathbb{S}^{n}\times\mathbb{R}$, Proposition~\ref{prop:2}. As a consequence, we will get a Simons type integral inequality and we will characterize when the equality is attained. Specifically, if $\phi$ and $\phi_{h}$ stand for the umbilicity tensor of $\Sigma^m$, and the umbilicity tensor related to the mean curvature vector field $h$, respectively, and $T$ denotes the tangential part of the vector field $\partial_{t}$ in $\mathbb{S}^{n}\times\mathbb{R}$, the main aim of the paper is to prove the following result: 

\begin{theorem}\label{teo:1}
	Let $\Sigma^{2}$ be a closed immersed $\mathcal{H}$-surface into the product space $\mathbb{S}^{n}\times\mathbb{R}$. Then,
	\begin{equation}\label{eq:main}
		\int_{\Sigma}\left\{|\phi|^{2}\left(1-5|T|^{2}-\dfrac{3}{2}|\phi|^{2}\right)-2(|\phi_{h}|+1)|T|^{2}+2\right\}d\Sigma\leq4\pi\rchi(\Sigma).
	\end{equation}
	In particular, the equality holds if and only if $\Sigma^{2}$ is isometric to either
	\begin{itemize}
		\item[(i)] a slice $\mathbb{S}^{2}\times\{t_{0}\}$, or
		\item[(ii)] a totally geodesic $2$-sphere or a Clifford torus in $\mathbb{S}^{3}\times\{t_{0}\}$, or
		\item[(iii)] a Veronese surface in $\mathbb{S}^{4}\times\{t_{0}\}$,
	\end{itemize}
	for some $t_{0}\in\mathbb{R}$.
\end{theorem}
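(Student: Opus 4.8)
The strategy is the Simons-type scheme used by Guo and Yin for~\eqref{eq:Guorelation}, with the Euler--Lagrange equation of $\mathcal{H}$ (Proposition~\ref{prop:2}) playing the role of the minimality hypothesis and with a careful accounting of the extra terms generated by the $\mathbb{R}$-factor.

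\emph{Set-up.} Working locally, fix an adapted orthonormal frame $\{e_1,e_2\}\subset T\Sigma$, $\{e_3,\dots,e_{n}\}\subset N\Sigma$, write $\partial_t=T+\eta$ with $T\in T\Sigma$, $\eta\in N\Sigma$ and $|T|^2+|\eta|^2=1$, and record the two structure equations coming from $\bar\nabla\partial_t=0$, namely $\nabla_X T=A_\eta X$ and $\nabla^\perp_X\eta=-\sigma(X,T)$; these are the source of every $|T|^2$-term in~\eqref{eq:main}. The Gauss equation for $\Sigma^2\subset\mathbb{S}^{n}\times\mathbb{R}$ gives $K=1-|T|^2+|h|^2-\tfrac12|\phi|^2$, so by the Gauss--Bonnet theorem $4\pi\rchi(\Sigma)=\int_\Sigma\bigl(2-2|T|^2+2|h|^2-|\phi|^2\bigr)\,d\Sigma$. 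Hence~\eqref{eq:main} is equivalent to an integral inequality with no topological term, and that is what we must produce.

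\emph{Core computation.} Apply the auxiliary second-order operator introduced before the theorem to the mean curvature function $H=|h|$ (equivalently, combine the Simons-type identity for $\tfrac12\Delta|\phi|^2$ with the Euler--Lagrange equation of $\mathcal{H}$) and integrate over the closed surface so that every divergence term drops. Feed in the integral inequality comparing that operator with the Laplacian (the ``nice inequality'' proved earlier) together with a Kato-type inequality $|\nabla\phi|^2\ge c_0|\nabla|\phi||^2$ in order to absorb the surviving gradient terms with the right sign. What remains is the integral of a polynomial in $|\phi|^2$, $|h|^2$, $|T|^2$ and the normal-shape quantity $|\phi_h|$, plus the algebraic Simons term $S:=\sum_{\alpha,\beta}\bigl(\|[A_\alpha,A_\beta]\|^2+(\operatorname{tr}(A_\alpha A_\beta))^2\bigr)$. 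For a surface $S$ is controlled by $\tfrac32|\phi|^4$: writing the traceless part of each $A_\alpha$ as a vector $v_\alpha\in\mathbb{R}^2$ one checks $S=8\bigl(\sum_\alpha|v_\alpha|^2\bigr)^2-4\sum_{\alpha,\beta}\langle v_\alpha,v_\beta\rangle^2\le 6\bigl(\sum_\alpha|v_\alpha|^2\bigr)^2=\tfrac32|\phi|^4$, the middle step being the Gram-matrix estimate $\sum_{\alpha,\beta}\langle v_\alpha,v_\beta\rangle^2\ge\tfrac12\bigl(\sum_\alpha|v_\alpha|^2\bigr)^2$, valid because all $v_\alpha$ lie in a fixed $2$-plane. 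The mixed terms built from $T$ and $\eta$ are estimated by Cauchy--Schwarz together with $|\eta|^2=1-|T|^2$, and collapse to $-5|T|^2|\phi|^2-2(|\phi_h|+1)|T|^2$. Combining all of this with the Gauss--Bonnet identity above yields~\eqref{eq:main}.

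\emph{Equality.} One reads the equality case off each estimate in turn. Equality in the Kato inequality and in the Gram estimate forces $\phi$ to be of a rigid algebraic type, and equality in the $T$-estimates forces either $\phi\equiv0$ or $T\equiv0$. In the latter case $\nabla t\equiv0$, so $\Sigma\subset\mathbb{S}^{n}\times\{t_0\}$ for some $t_0\in\mathbb{R}$; there $\Sigma$ is a closed $\mathcal{H}$-surface of $\mathbb{S}^{n}$ attaining equality in~\eqref{eq:Guorelation}, hence by the Guo--Yin/Chern--do~Carmo--Kobayashi classification a totally geodesic $2$-sphere, a Clifford torus in $\mathbb{S}^{3}$, or a Veronese surface in $\mathbb{S}^{4}$; the degenerate case $\phi\equiv0$ (a closed totally umbilical surface) gives a totally geodesic $2$-sphere as well, i.e.\ the slice $\mathbb{S}^{2}\times\{t_0\}$ when $n=2$. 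The main obstacle, I expect, is exactly the bookkeeping of the product-structure contributions: tracking every term that $\nabla T$, $|\eta|^2$ and the non-constant ambient sectional curvature add to the Simons identity and to the auxiliary operator, and then applying Cauchy--Schwarz \emph{optimally} so that they assemble into precisely $-5|T|^2|\phi|^2-2(|\phi_h|+1)|T|^2$ with nothing left over; matching the Kato constant $c_0$ with the quartic coefficient so that the equality set is exactly (i)--(iii) is the second delicate point.
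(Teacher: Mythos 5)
Your skeleton is the paper's: rewrite the Simons-type formula of Proposition~\ref{prop:1} in terms of the traceless operators $\phi_\alpha$, feed in the Euler--Lagrange equation of Proposition~\ref{prop:2}, bound the algebraic term by $\tfrac32|\phi|^4$, estimate the $T$-terms by Cauchy--Schwarz, and finish with the Gauss equation and Gauss--Bonnet. Your Gram-matrix computation showing $\sum_{\alpha,\beta}\bigl(N(\phi_{\alpha}\phi_{\beta}-\phi_{\beta}\phi_{\alpha})+[{\rm tr}(\phi_{\alpha}\phi_{\beta})]^{2}\bigr)\le\tfrac32|\phi|^{4}$ for $2\times2$ traceless matrices is correct and is in fact a clean self-contained proof of the $m=2$ case of Lemma~\ref{lem:3.4}. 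The genuine gap is in the analytic core. A Kato-type inequality $|\nabla\phi|^{2}\ge c_{0}|\nabla|\phi||^{2}$ plays no role here and cannot absorb the gradient terms: what is needed is the Huisken-type estimate of Lemma~\ref{lem:2} (for $m=2$, $|\nabla^{\perp}\sigma|^{2}\ge 3|\nabla^{\perp}h|^{2}+2\langle\nabla^{\perp}_{T}h,N\rangle$), used together with the operator $\square^{\ast}$ applied to the normal field $2h$ (not an operator applied to the scalar $H$): the identity $\square^{\ast}(2h)=4\langle\Delta^{\perp}h,h\rangle-2\sum_{\alpha}{\rm tr}(A_{\alpha}\circ{\rm Hess}\,H^{\alpha})$, the relation $\tfrac12\Delta H^{2}=\langle\Delta^{\perp}h,h\rangle+|\nabla^{\perp}h|^{2}$, and Corollary~\ref{cor:1}, which makes $\int_{\Sigma}\square^{\ast}(2h)\,d\Sigma=\int_{\Sigma}2\langle\nabla^{\perp}_{T}h,N\rangle\,d\Sigma$ and thus cancels exactly the cross term produced by Lemma~\ref{lem:2}. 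This is what converts \eqref{eq:8_2} into the lower bound of Proposition~\ref{prop:3} for $\int_{\Sigma}\bigl(|\nabla^{\perp}\sigma|^{2}+2\sum_{\alpha}{\rm tr}(A_{\alpha}\circ{\rm Hess}\,H^{\alpha})\bigr)d\Sigma$; without it the Hessian terms of Proposition~\ref{prop:1} and the $\Delta^{\perp}h$ term of the Euler--Lagrange equation never meet, and the ``polynomial remainder'' you assert is not obtained. You also omit the needed estimate $\sum_{\alpha,\beta}H^{\alpha}H^{\beta}{\rm tr}(\phi_{\alpha}\phi_{\beta})\le H^{2}|\phi|^{2}$ and the identities (using ${\rm tr}(\phi_{\alpha}^{2}\phi_{\beta})=0$) that trade the $A_\alpha$-cubic and quartic terms for $\phi_\alpha$-terms plus $2H^{2}|\phi|^{2}$.

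The equality discussion as sketched would not go through. The estimates do not force ``$\phi\equiv0$ or $T\equiv0$''; they force $|\phi_{N}|=\langle N,h\rangle=0$ and $(|T|^{2}+|\phi|^{2})H^{2}=0$, i.e.\ either $|T|=|\phi|=0$ or $H=0$. In the first branch, passing from totally umbilical in a slice to totally geodesic is not automatic: it uses the Euler--Lagrange equation via Corollary~\ref{cor:tu_tg}. In the branch $H=0$ one still needs: $A_{N}=0$, hence $|T|$ constant; equality in Lemma~\ref{lem:2}, hence $\nabla^{\perp}\sigma=0$; the Codazzi equation then gives $N=0$ or $T=0$; the case $N=0$ (a vertical cylinder over a circle) must be excluded by compactness; only then is $\Sigma^{2}$ a minimal surface with parallel second fundamental form in a slice, classified by Lawson's result in codimension one and by Li--Li's theorem in higher codimension. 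Finally, your shortcut through ``equality in \eqref{eq:Guorelation}'' is not available: with $T=0$ the present inequality reduces to \eqref{eq:reduced}, whose coefficient $\tfrac32$ agrees with Guo--Yin's $2-\tfrac{1}{n-2}$ only when $n=4$, so equality in one is not equality in the other, and you would in addition have to justify that an $\mathcal{H}$-surface of $\mathbb{S}^{n}\times\mathbb{R}$ lying in a slice is a stationary surface for the corresponding functional in $\mathbb{S}^{n}$.
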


On the one hand, let us remark that, since given $m,n\in\mathbb{N}$, $m<n$, the unit sphere $\mathbb{S}^m$ is a totally geodesic submanifold of the unit sphere $\mathbb{S}^n$, the above surfaces for which the equality in~\eqref{eq:main} is attained are in fact surfaces of the product $\mathbb{S}^n\times\mathbb{R}$ in general dimension. On the other hand, let us also observe that~\eqref{eq:main} do not depend on the codimension. Besides that, in the case where $\Sigma^2$ is contained in a slice of $\mathbb{S}^n\times\mathbb{R}$, $T=0$. Thus,~\eqref{eq:main} reduces to
\begin{equation}\label{eq:reduced}
	\int_{\Sigma}\left\{|\phi|^{2}\left(1-\dfrac{3}{2}|\phi|^{2}\right)+2\right\}d\Sigma\leq4\pi\rchi(\Sigma),
\end{equation}
which in the case $n=4$ coincides with Guo and Yin's inequality,\eqref{eq:Guorelation}, and it improves it when $n>4$.

\section{Preliminaries}

In this section, we will present some basic facts about the product manifold $\mathbb{S}^n\times\mathbb{R}$, as well as a suitable Simons type formula for submanifolds immersed in such product. 

As usual, let $\mathbb{R}^{n+2}$ be the $(n+2)$-dimensional Euclidean space. Then, the product space $\mathbb{S}^{n}\times\mathbb{R}$ is defined as the following subset of $\mathbb{R}^{n+2}$:
\begin{equation}
	\mathbb{S}^{n}\times\mathbb{R}=\{(x_{1},\ldots,x_{n+2})\in\mathbb{R}^{n+2}\,;\,x_{1}^{2}+\cdots+x_{n+1}^{2}=1\},
\end{equation}
equipped with the induced metric from the Euclidean space, $\langle,\rangle$, i.e. $\mathbb{S}^{n}\times\mathbb{R}$ is the usual product of the unit sphere $\mathbb{S}^{n}(1)$ and the real line. Associated to it,
\begin{equation}\label{eq:1}
	\partial_{t}:=(\partial/\partial_{t})\big|_{(p,t)},\quad(p,t)\in \mathbb{S}^{n}\times\mathbb{R},
\end{equation}
is a parallel and unitary vector field, that is,
\begin{equation}\label{eq:2}
	\overline{\nabla}\partial_{t}=0\quad\mbox{and}\quad\langle\partial_{t},\partial_{t}\rangle=1,
\end{equation}
where $\overline{\nabla}$ is the Levi-Civita connection of $\mathbb{S}^{n}\times\mathbb{R}$. 

Concerning the curvature tensor of $\mathbb{S}^{n}\times\mathbb{R}$, it is well-known that it satisfies (see~\cite{Daniel:07}):
\begin{equation}\label{eq:5}
	\begin{split}
		\overline{R}(X,Y)Z=&\,\,\langle X,Z\rangle Y-\langle Y,Z\rangle X+\langle Z,\partial_{t}\rangle(\langle Y,\partial_{t}\rangle X-\langle X,\partial_{t}\rangle Y)\\
		&+\,(\langle Y,Z\rangle\langle X,\partial_{t}\rangle-\langle X,Z\rangle\langle Y,\partial_{t}\rangle)\partial_{t},
	\end{split}
\end{equation}
where $X,Y,Z\in\mathfrak{X}(\mathbb{S}^n\times\mathbb{R})$ and $\overline{R}$ is defined by (see~\cite{O'Neill:83})
\begin{equation}
	\overline{R}(X,Y)Z=\overline{\nabla}_{[X,Y]}Z-[\overline{\nabla}_{X},\overline{\nabla}_{Y}]Z.
\end{equation}

Let us consider $\Sigma^{m}$ an $m$-dimensional submanifold of $\mathbb{S}^{n}\times\mathbb{R}$ and let us also denote by $\langle,\rangle$ the induced metric on $\Sigma^m$. In this setting, we will denote by $\nabla$ the Levi-Civita connection of $\Sigma^{m}$ and $\nabla^{\perp}$ will stand for the normal connection of $\Sigma^{m}$ in $\mathbb{S}^{n}\times\mathbb{R}$. We will denote by $\sigma$ the second fundamental form of $\Sigma^{m}$ in $\mathbb{S}^{n}\times\mathbb{R}$ and by $A_{\xi}$ the Weingarten operator associated to a fixed normal vector field $\xi\in\mathfrak{X}(\Sigma)^{\perp}$. We note that, for each $\xi\in\mathfrak{X}(\Sigma)^{\perp}$, $A_{\xi}$ is a symmetric endomorphism of the tangent space $T_p\Sigma$ at $p\in\Sigma^{m}$. Moreover, $A_{\xi}$ and $\sigma$ are related by
\begin{equation}
	\langle\sigma(X,Y),\xi\rangle=\langle A_{\xi}(X),Y\rangle,
\end{equation}
for all $X,Y\in\mathfrak{X}(\Sigma)$ and $\xi\in\mathfrak{X}(\Sigma)^{\perp}$. We also recall that the Gauss and Weingarten formulas of $\Sigma^{m}$ in $\mathbb{S}^{n}\times\mathbb{R}$ are given by
\begin{equation}\label{Gauss and Weingarten}
	\overline{\nabla}_{X}Y=\nabla_{X}Y+\sigma(X,Y)\quad\mbox{and}\quad\overline{\nabla}_{X}\xi=-A_{\xi}(X)+\nabla^{\perp}_{X}\xi,
\end{equation}
for all $X,Y\in\mathfrak{X}(\Sigma)$ and $\xi\in\mathfrak{X}(\Sigma)^{\perp}$.

Since $\partial_t\in\mathfrak{X}(\mathbb{S}^n\times\mathbb{R})$, it can be decomposed along $\Sigma^m$ as
\begin{equation}\label{eq:3}
	\partial_{t}=T+N
\end{equation}
where $T:=\partial_{t}^{\top}$ and $N:=\partial_{t}^{\perp}$ denote, respectively, the tangent and normal part of the vector field $\partial_{t}$ on the tangent and normal bundle of the submanifold $ \Sigma^{m}$ in $\mathbb{S}^{n}\times\mathbb{R}$. Moreover, from~\eqref{eq:2} and~\eqref{eq:3}, we get the relation
\begin{equation}\label{eq:4}
	1=\langle\partial_{t},\partial_{t}\rangle=|T|^{2}+|N|^{2},
\end{equation}
$|\cdot|$ being the norm related to the metric $\langle,\rangle$. It is clear that, if $T$ vanishes identically along $\Sigma$, then $\partial_{t}$ is normal to $\Sigma^{m}$ and hence $\Sigma^{m}$ lies in a slice $\mathbb{S}^{n}\times\{t_0\}$, $t_0\in\mathbb{R}$. Besides that, a direct computation from~\eqref{eq:2} and~\eqref{Gauss and Weingarten} gives
\begin{equation}\label{eq:6}
	\nabla_{X}T=A_{N}(X)\quad\mbox{and}\quad\nabla_{X}^{\perp}N=-\sigma(T,X),\quad\mbox{for all}\quad X\in\mathfrak{X}(\Sigma).
\end{equation}

A well-known fact is that the curvature tensor $R$ of $\Sigma^{m}$ can be described in terms of its second fundamental form $\sigma$ and the curvature tensor $\overline{R}$ of the ambient space $\mathbb{S}^{n}\times\mathbb{R}$ by the so-called {\em Gauss equation}, which is given by
\begin{equation}\label{eq:Gauss}
	\begin{split}
		\langle R(X,Y)Z,W\rangle&=\langle\overline{R}(X,Y)Z,W\rangle+\langle\sigma(X,Z),\sigma(Y,W)\rangle-\langle\sigma(Y,Z),\sigma(X,W)\rangle\\
		&=\langle X,Z\rangle\langle Y,W\rangle-\langle Y,Z\rangle\langle X,W\rangle+\langle Z,T\rangle(\langle Y,T\rangle\langle X,W\rangle-\langle X,T\rangle\langle Y,W\rangle)\\
		&\quad+(\langle Y,Z\rangle\langle X,T\rangle-\langle X,Z\rangle\langle Y,T\rangle)\langle T,W\rangle\\
		&\quad\,\,\,+\langle\sigma(X,Z),\sigma(Y,W)\rangle-\langle\sigma(Y,Z),\sigma(X,W)\rangle,
	\end{split}
\end{equation}
for all $X,Y,Z,W\in\mathfrak{X}(\Sigma)$, and the {\em Codazzi equation}
\begin{equation}\label{eq:Codazzi}
	(\nabla^{\perp}_{Y}\sigma)(X,Z)-(\nabla^{\perp}_{X}\sigma)(Y,Z)=(\overline{R}(X,Y)Z)^{\perp}=\left(\langle Y,Z\rangle\langle X,T\rangle-\langle X,Z\rangle\langle Y,T\rangle\right)N,
\end{equation}
for all $X,Y,Z\in\mathfrak{X}(\Sigma)$, where $\nabla^{\perp}\sigma$ satisfies:
\begin{equation}\label{derivative condition}
	(\nabla^{\perp}_{X}\sigma)(Y,Z)=\nabla^{\perp}_{X}\sigma(Y,Z)-\sigma(\nabla_{X}Y,Z)-\sigma(Y,\nabla_{X}Z).
\end{equation}

Let us denote by $h$ the mean curvature vector field of $\Sigma^m$ in $\mathbb{S}^{n}\times\mathbb{R}$, defined by
\begin{equation}\label{mean curvature}
	h=\dfrac{1}{m}{\rm tr}(\sigma)
\end{equation}
and by $H$ its norm, i.e. $H^2=\langle h,h \rangle$. It is immediate to check that if $\{e_{m+1},\ldots,e_{n+1}\}$ is an orthonormal frame of $\mathfrak{X}(\Sigma)^\perp$, we can write~\eqref{mean curvature} in the following way:
\begin{equation}\label{mean curvature2}
	h=\sum_{\alpha}H^{\alpha}e_{\alpha}\quad\mbox{where}\quad H^{\alpha}:=\frac{1}{m}{\rm tr}(A_{\alpha})=\langle h,e_\alpha\rangle,
\end{equation}
and $A_\alpha:=A_{e_\alpha}$. In particular, $mH^{2}={\rm tr}(A_{h})$.

Next, for any normal vector field $\xi$, let us define the tensor $\phi_\xi$ as the traceless part of $A_{\xi}$, i.e. $\phi_{\xi}:=A_{\xi}-\frac{1}{m}{\rm tr}(A_{\xi})I$. We shall also consider $\phi$ the traceless part of $\sigma$, given by
\begin{equation}\label{traceless-tensor}
	\phi(X,Y):=\sigma(X,Y)-\langle X,Y\rangle h.
\end{equation}
The tensors $\phi$ and $\phi_\xi$ are also known as the umbilicity tensor and the umbilicity tensor related to $\xi$ of $\Sigma^m$, respectively. It is easy to check that
\begin{equation}\label{eq:7}
	|\phi|^{2}=|\sigma|^{2}-mH^{2}\quad\mbox{and}\quad|\phi_{\xi}|^{2}=|A_{\xi}|^{2}-m\langle\xi,h\rangle^{2}.
\end{equation}
Observe that $|\phi|^{2}=0$ if and only if $ \Sigma^{m}$ is a totally umbilical submanifold of $\mathbb{S}^{n}\times\mathbb{R}$.

We end this section by recalling the following two results, which we shall use later in this paper. The first one is a Simons type formula proved in~\cite{dos Santos:21,dos Santos:21.1}. It should be noticed that, for the sake of simplicity, in Proposition~\ref{prop:1} and, in general, along this manuscript we will naturally identify, at convenience, the Weingarten operator with its associated symmetric matrix.
\begin{proposition}\label{prop:1}
	Let $\Sigma^{m}$ be a submanifold in the product space $\mathbb{S}^{n}\times\mathbb{R}$. Then
	\begin{equation}
		\begin{split}
			\dfrac{1}{2}\Delta|\sigma|^{2}&=|\nabla^{\perp}\sigma|^{2}+m\sum_{\alpha}{\rm tr}(A_{\alpha}\circ{\rm Hess}\,H^{\alpha})+m|\phi_{N}|^{2}-2m\sum_{\alpha}|\phi_{\alpha}(T)|^{2}+(m-|T|^{2})|\phi|^{2}\\
			&\quad-m\langle\phi_{h}(T),T\rangle+\sum_{\alpha,\beta}{\rm tr}(A_{\beta}){\rm tr}(A^{2}_{\alpha} A_{\beta})-\sum_{\alpha,\beta}\left(N(A_{\alpha} A_{\beta}-A_{\beta} A_{\alpha})+[{\rm tr}(A_{\alpha} A_{\beta})]^{2}\right),
		\end{split}
	\end{equation}
	where $\phi_\alpha:=\phi_{e_\alpha}$, $m+1\leq \alpha,\beta \leq n+1$ and $N(B):={\rm tr}(BB^{t})$ for all matrix $B$.
\end{proposition}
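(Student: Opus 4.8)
The plan is to run the classical Simons--Bochner argument, adapted to the ambient curvature of $\mathbb{S}^n\times\mathbb{R}$, feeding in the Codazzi equation~\eqref{eq:Codazzi}, the Gauss equation~\eqref{eq:Gauss}, the explicit ambient curvature~\eqref{eq:5}, and the structure relations~\eqref{eq:6} for $T$ and $N$. Fix $p\in\Sigma^m$ and work in a local orthonormal tangent frame $\{e_i\}$ that is geodesic at $p$ (so $\nabla_{e_i}e_j(p)=0$) together with a local orthonormal normal frame $\{e_\alpha\}$. The starting point is the pointwise Weitzenb\"ock identity
\begin{equation}
\tfrac12\Delta|\sigma|^2=|\nabla^\perp\sigma|^2+\sum_{i,j}\big\langle \sigma(e_i,e_j),\,\widetilde{\Delta}\sigma(e_i,e_j)\big\rangle,
\end{equation}
where $\widetilde{\Delta}\sigma=\sum_k\nabla^\perp_{e_k}\nabla^\perp_{e_k}\sigma$ is the rough Laplacian of $\sigma$; this follows by expanding $e_ke_k\langle\sigma,\sigma\rangle$ at $p$. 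Thus everything reduces to identifying $\widetilde{\Delta}\sigma$.

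To compute $\widetilde{\Delta}\sigma(e_i,e_j)$ I would first use Codazzi~\eqref{eq:Codazzi} to move the contracted derivative index into an argument slot, writing $(\nabla^\perp_{e_k}\sigma)(e_i,e_j)$ in terms of $(\nabla^\perp_{e_i}\sigma)(e_k,e_j)$ plus the ambient term $(\overline{R}(e_k,e_i)e_j)^\perp$ read off from~\eqref{eq:5}. Differentiating once more and commuting $\nabla^\perp_{e_k}$ with $\nabla^\perp_{e_i}$ via the Ricci identity produces three kinds of contributions: (a) a term $\sum_k\nabla^\perp_{e_i}(\nabla^\perp_{e_k}\sigma)(e_k,e_j)$ in which the inner contraction $\sum_k(\nabla^\perp_{e_k}\sigma)(e_k,\cdot)$ equals $m\nabla^\perp h$ up to an ambient correction (again by Codazzi and the definition~\eqref{mean curvature} of $h$); differentiating this and contracting against $\sigma$ yields, after passing to the components $H^\alpha=\langle h,e_\alpha\rangle$, the Hessian term $m\sum_\alpha{\rm tr}(A_\alpha\circ{\rm Hess}\,H^\alpha)$; (b) curvature commutator terms coupling the normal curvature $R^\perp$ and the tangential curvature $R$ acting on the arguments $e_i,e_j$; and (c) the covariant derivative of the ambient correction $(\overline{R}(e_k,e_i)e_j)^\perp$.

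The bulk of the computation is then algebraic. Substituting the Gauss equation~\eqref{eq:Gauss} and the ambient curvature~\eqref{eq:5} into the commutators (b), the purely extrinsic part reproduces the standard Simons quartic invariants $\sum_{\alpha,\beta}{\rm tr}(A_\beta){\rm tr}(A^2_\alpha A_\beta)$ and $\sum_{\alpha,\beta}\big(N(A_\alpha A_\beta-A_\beta A_\alpha)+[{\rm tr}(A_\alpha A_\beta)]^2\big)$, exactly as in the Euclidean and spherical cases, while the intrinsic-curvature pieces carrying a factor $\langle\,\cdot\,,T\rangle$ or $N$ assemble into the $|T|^2$-weighted terms. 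Here I would systematically decompose $\partial_t=T+N$ from~\eqref{eq:3} and invoke~\eqref{eq:6}, namely $\nabla_{e_i}T=A_N(e_i)$ and $\nabla^\perp_{e_i}N=-\sigma(T,e_i)$, whenever a derivative lands on $T$ or $N$ in parts (c) and in the ambient correction to (a). Collecting by type and recalling the definitions~\eqref{traceless-tensor} and~\eqref{eq:7} of $\phi$ and $\phi_\xi$, the tangential and normal terms organize precisely into $m|\phi_N|^2-2m\sum_\alpha|\phi_\alpha(T)|^2+(m-|T|^2)|\phi|^2-m\langle\phi_h(T),T\rangle$, which together with (a) and the quartic invariants gives the claimed formula.

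The hard part will be exactly this bookkeeping in the last step: there are many curvature correction terms of comparable shape, and keeping the $T$- and $N$-dependent pieces distinct---accounting correctly for each derivative of $T$ and $N$ through~\eqref{eq:6}, and symmetrizing the traceless contractions so that they collapse into the $\phi_N$, $\phi_\alpha(T)$, $\phi_h$ and $|\phi|^2$ blocks with the stated coefficients $m$, $-2m$, $-m$ and $m-|T|^2$---demands care. By contrast, the extraction of the quartic invariants is routine and identical to Simons' original computation, so the novelty lies entirely in tracking the ambient terms generated by the factor $\partial_t$ in~\eqref{eq:5}.
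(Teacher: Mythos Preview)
The paper does not actually prove this proposition: it is quoted as a known result, with the proof deferred to the earlier papers~\cite{dos Santos:21,dos Santos:21.1} of dos Santos and da Silva. So there is no ``paper's own proof'' to compare against here.

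That said, your outline is exactly the standard Simons--Bochner derivation one expects to find in those references: start from the Weitzenb\"ock identity for $|\sigma|^2$, use Codazzi~\eqref{eq:Codazzi} twice to route the rough Laplacian of $\sigma$ through $m\nabla^\perp h$ (producing the ${\rm Hess}\,H^\alpha$ term), commute covariant derivatives via the Ricci identity to extract the quartic invariants in $A_\alpha$, and then feed in the explicit ambient curvature~\eqref{eq:5} together with~\eqref{eq:6} to generate the $T$- and $N$-dependent corrections. Your assessment of where the work lies is also accurate: the quartic piece is identical to the space-form case, and the only genuine labor is the bookkeeping of the $\partial_t$-terms so that they collapse into the $\phi_N$, $\phi_\alpha(T)$, $\phi_h$ and $|\phi|^2$ blocks with the stated coefficients. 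There is no visible gap in the strategy; what remains is a careful (but routine) computation.
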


The second one is an algebraic lemma which was proved in~\cite{Li:92}.
\begin{lemma}\label{lem:3.4}
	Let $B_1,\ldots,B_p$, where $p\geq2$, be symmetric $m\times m$ matrices. Then
	\begin{equation}
		\sum_{\alpha,\beta=1}^{p}\left(N(B_{\alpha}B_{\beta}-B_{\beta}B_{\alpha})+[{\rm tr}(B_\alpha B_\beta)]^{2}\right)\leq\frac{3}{2}\left(\sum_{\alpha=1}^{p}N(B_{\alpha})\right)^{2}.
	\end{equation}
\end{lemma}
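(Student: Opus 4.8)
The plan is to prove this as a purely algebraic statement, exploiting two natural invariances of both sides and then collapsing everything to a two-matrix situation. Since each $B_\alpha$ is symmetric, $N(B_\alpha)={\rm tr}(B_\alpha^2)$, and each commutator $B_\alpha B_\beta-B_\beta B_\alpha$ is skew-symmetric, so that $N(B_\alpha B_\beta-B_\beta B_\alpha)=-{\rm tr}\bigl((B_\alpha B_\beta-B_\beta B_\alpha)^2\bigr)=2\,{\rm tr}(B_\alpha^2B_\beta^2)-2\,{\rm tr}(B_\alpha B_\beta B_\alpha B_\beta)$. Writing $b_\alpha:={\rm tr}(B_\alpha^2)$ and splitting the double sum into diagonal and off-diagonal terms, the left-hand side becomes $\sum_\alpha b_\alpha^2+2\sum_{\alpha<\beta}N(B_\alpha B_\beta-B_\beta B_\alpha)+2\sum_{\alpha<\beta}[{\rm tr}(B_\alpha B_\beta)]^2$, so the goal is to bound the remaining commutator and mixed-trace terms by $\tfrac32\bigl(\sum_\alpha b_\alpha\bigr)^2$.

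First I would record the two invariances. Both sides are unchanged under a simultaneous orthogonal conjugation $B_\alpha\mapsto QB_\alpha Q^{t}$, and, crucially, under an orthogonal remixing $B_\alpha\mapsto\sum_\gamma O_{\alpha\gamma}B_\gamma$ of the family: setting $S_{\alpha\beta}:={\rm tr}(B_\alpha B_\beta)$ for the (positive semidefinite) Gram matrix, one has $\sum_\alpha N(B_\alpha)={\rm tr}(S)$, $\sum_{\alpha,\beta}[{\rm tr}(B_\alpha B_\beta)]^2=N(S)$, and the commutator sum is the squared norm of the skew tensor $C_{\alpha\beta}=B_\alpha B_\beta-B_\beta B_\alpha$ acted on by $O\otimes O$; all three are $O$-invariant. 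Diagonalising $S$, I may thus assume ${\rm tr}(B_\alpha B_\beta)=0$ for $\alpha\neq\beta$, which annihilates the off-diagonal trace terms and leaves
\[
\text{LHS}=\sum_\alpha b_\alpha^2+2\sum_{\alpha<\beta}N(B_\alpha B_\beta-B_\beta B_\alpha),
\]
so that the problem reduces to bounding the pure commutator sum by $\tfrac32\bigl(\sum_\alpha b_\alpha\bigr)^2-\sum_\alpha b_\alpha^2$.

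The base case $p=2$ I would settle directly: from the classical commutator bound $N(AB-BA)\le 2N(A)N(B)$ and the arithmetic–geometric inequality $b_1b_2\le\tfrac12(b_1^2+b_2^2)$ one gets $2N(B_1B_2-B_2B_1)\le4b_1b_2\le\tfrac32(b_1+b_2)^2-(b_1^2+b_2^2)$. Equality forces $b_1=b_2$ together with saturation of the commutator bound, which, up to conjugation, means $B_1,B_2$ are equal-norm multiples of a pair of anticommuting $2\times2$ Pauli-type blocks supported on a common plane — exactly the algebraic configuration underlying the Clifford-torus and Veronese extremisers in the intended application.

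The hard part is the passage to $p\ge3$, where the naive pairwise estimate $N(B_\alpha B_\beta-B_\beta B_\alpha)\le 2b_\alpha b_\beta$ is genuinely insufficient: summing it over all pairs yields only $2\bigl(\sum_\alpha b_\alpha\bigr)^2-\sum_\alpha b_\alpha^2$, which is strictly larger than $\tfrac32\bigl(\sum_\alpha b_\alpha\bigr)^2$ when the $b_\alpha$ are comparable across $p\ge3$ indices. The resolution is that these pairwise bounds cannot all be saturated at once, and to capture this I would keep the two competing contributions together rather than estimating them separately: writing the commutator sum as $2\,{\rm tr}(W^2)-2\sum_{\alpha,\beta}{\rm tr}(B_\alpha B_\beta B_\alpha B_\beta)$ with $W:=\sum_\alpha B_\alpha^2\succeq0$ and ${\rm tr}(W)=\sum_\alpha b_\alpha$, the task becomes a sharp lower bound on the quartic cross term ${\rm tr}(B_\alpha B_\beta B_\alpha B_\beta)$ that exactly compensates the size of ${\rm tr}(W^2)$. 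An equivalent route is a compactness argument: maximise the scale-invariant ratio over the invariance-reduced (hence compact) configuration space and show, by a first-variation analysis, that an extremiser concentrates on two of the $B_\alpha$, collapsing to the already-settled case $p=2$. This joint quartic estimate, together with the accompanying equality discussion pinning the extremisers to the two-block Pauli configuration, is the crux; the invariance reduction and the two-matrix case are then essentially bookkeeping.
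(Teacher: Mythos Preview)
The paper does not prove this lemma; it simply quotes it from Li--Li~\cite{Li:92}, so there is no in-paper argument to compare your attempt against.

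Evaluating the proposal on its own merits: the setup and the case $p=2$ are correct. The $O(p)$-invariance reduction to a trace-orthogonal family is the right first move, and the two-matrix estimate via $N([A,B])\le 2\,N(A)N(B)$ together with AM--GM goes through cleanly. The gap is entirely in the passage to $p\ge 3$. You correctly diagnose that summing the pairwise commutator bound yields only $2\bigl(\sum_\alpha b_\alpha\bigr)^2-\sum_\alpha b_\alpha^2$, which overshoots the target, and you then offer two escape routes: an unspecified ``sharp lower bound on the quartic cross term ${\rm tr}(B_\alpha B_\beta B_\alpha B_\beta)$'', or a compactness/first-variation argument showing that extremisers concentrate on two matrices. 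Neither is actually carried out. The quartic inequality is not stated, let alone proved, and the assertion that a maximiser of the scale-invariant ratio must be supported on at most two of the $B_\alpha$ is precisely the content of the lemma --- it requires an actual Euler--Lagrange computation ruling out interior critical configurations with three or more nonzero blocks, and nothing in your preceding reductions supplies this. Since you yourself label this step ``the crux'', what you have written is a strategy rather than a proof; the substantive work of~\cite{Li:92} lies exactly in the step you have left open.
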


\section{The first variation of the total mean curvature}

The goal of this section is to study the stationary points of the functional $\mathcal{H}$, defined in~\eqref{total}, for closed surfaces in $\mathbb{S}^n\times\mathbb{R}$. To that end, we will recall the rough Laplacian $\Delta^{\perp}:\mathfrak{X}(\Sigma)^{\perp}\to\mathfrak{X}(\Sigma)^{\perp}$ which is defined by setting
\begin{equation}\label{laplacian}
	\Delta^{\perp}\xi:={\rm tr}(\nabla^{2}\xi)=\sum_{i}\nabla^{\perp}_{e_{i}}\nabla^{\perp}_{e_{i}}\xi,
\end{equation}
where $\{e_{1},\ldots,e_{m}\}$ is any orthonormal frame of $\mathfrak{X}(\Sigma)$.

Now, let us compute the first variational formula of $\mathcal{H}$.

\begin{proposition}\label{prop:2}
	Let $x:\Sigma^{m}\rightarrow \mathbb{S}^{n}\times\mathbb{R}$ be an isometrically immersed closed submanifold. Then $x$ is a stationary point of $\mathcal{H}$, or an $\mathcal{H}$-submanifold, if and only if
	\begin{equation}\label{eq:8}
		H^{m-2}\left(\Delta^{\perp}h+\left(m-|T|^{2}-mH^{2}\right)h-m\langle N,h\rangle N+\sum_{\alpha,\beta}H^{\alpha}{\rm tr}(A_{\alpha} A_{\beta})e_{\beta}\right)=0,
	\end{equation}
	for $m>2$, and
	\begin{equation}\label{eq:8_2}
		\Delta^{\perp}h+\left(2-|T|^{2}-2H^{2}\right)h-2\langle N,h\rangle N+\sum_{\alpha,\beta}H^{\alpha}{\rm tr}(A_{\alpha} A_{\beta})e_{\beta}=0,
	\end{equation}
	in the case $m=2$, where $m+1\leq\alpha,\beta\leq n+1$.
\end{proposition}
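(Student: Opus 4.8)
The plan is to compute the first variation of $\mathcal{H}(\Sigma)=\int_\Sigma H^m\,d\Sigma$ along a normal variation $x_s:\Sigma^m\to\mathbb{S}^n\times\mathbb{R}$ with variational vector field $V=\partial x_s/\partial s|_{s=0}\in\mathfrak{X}(\Sigma)^\perp$, since tangential variations only reparametrize $\Sigma$ and do not affect the value of $\mathcal{H}$. First I would recall the two standard first-variation formulas: for the volume element one has $\frac{\partial}{\partial s}(d\Sigma)=-m\langle h,V\rangle\,d\Sigma$, and for the mean curvature vector the evolution reads $\frac{\partial h}{\partial s}=\frac{1}{m}\big(\Delta^\perp V+\mathrm{tr}(\bar R(\cdot,V)\cdot)^\perp+\sum_\alpha\mathrm{tr}(A_\alpha A_V)e_\alpha+ \text{(terms normal, possibly with a tangential piece that drops out)}\big)$; more precisely I would use the well-known Jacobi-type operator expression $\big(\frac{\partial h}{\partial s}\big)^\perp=\frac1m\Delta^\perp V+\frac1m\sum_{\alpha,\beta}\langle A_\alpha,A_V\rangle\,? \ldots$ — the cleanest route is to differentiate $mH^2=\mathrm{tr}(A_h)$ or, better, to compute $\frac{\partial}{\partial s}(H^2)=\frac{\partial}{\partial s}\langle h,h\rangle=2\langle \frac{\partial h}{\partial s},h\rangle$ and then only the $\perp$-component of $\partial h/\partial s$ survives when paired with $h$.

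Concretely, the key steps in order are: (1) write $\frac{\partial}{\partial s}\mathcal{H}=\int_\Sigma\big(mH^{m-1}\frac{\partial H}{\partial s}-mH^m\langle h,V\rangle\big)d\Sigma$, and since $2H\frac{\partial H}{\partial s}=2\langle\partial h/\partial s,h\rangle$ this becomes $\int_\Sigma\big(mH^{m-2}\langle\partial h/\partial s,h\rangle - mH^m\langle h,V\rangle\big)d\Sigma$; (2) insert the first-variation formula for $h$, namely the standard identity (see e.g. the derivation underlying Simons' formula, or Chen's work) that the normal part of $\partial h/\partial s$ equals $\frac1m\Delta^\perp V + \frac1m\sum_{\alpha}\big(\sum_i\langle A_V e_i, A_\alpha e_i\rangle\big)e_\alpha + \frac1m\,\mathrm{tr}\big(\bar R(\cdot,V)\cdot\big)^\perp$; (3) substitute the ambient curvature tensor from~\eqref{eq:5}: a direct trace computation gives $\mathrm{tr}\big(\bar R(\cdot,V)\cdot\big)^\perp = (m-|T|^2)V - m\langle V,? \rangle\ldots$ — here I would carefully use $\partial_t=T+N$ to get the terms $(m-|T|^2)V$ and the correction $-\langle V,N\rangle\partial_t$ projected normally, i.e. $-\langle V,N\rangle N$ combined appropriately, reproducing the coefficient $(m-|T|^2-mH^2)$ on $h$ and the $-m\langle N,h\rangle N$ term after pairing with $h$; (4) integrate $\int_\Sigma H^{m-2}\langle\Delta^\perp V,h\rangle\,d\Sigma = \int_\Sigma\langle V,\Delta^\perp(H^{m-2}h)\rangle\,d\Sigma$ by self-adjointness of $\Delta^\perp$ on the closed manifold $\Sigma$ (this is where closedness is used, to discard divergence terms); (5) collect all coefficients of $V$, conclude that the variation vanishes for every admissible $V\in\mathfrak{X}(\Sigma)^\perp$ iff the bracketed normal field vanishes, which is precisely~\eqref{eq:8}; finally (6) specialize $m=2$, where $H^{m-2}=H^0=1$ and the operator $\Delta^\perp$ acts directly, to obtain~\eqref{eq:8_2}, being mildly careful that when $m>2$ the factor $H^{m-2}$ cannot in general be cancelled (it may vanish on the zero set of $H$), whereas for $m=2$ there is no such factor.

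The main obstacle I expect is step (2)–(3): getting the first-variation formula for the mean curvature vector $h$ in a general Riemannian ambient correct, including the ambient-curvature contribution, and then honestly reducing $\mathrm{tr}\big(\bar R(e_i,V)e_i\big)^\perp$ using~\eqref{eq:5} so that the decomposition $\partial_t=T+N$ produces exactly the terms $-|T|^2h$ and $-m\langle N,h\rangle N$ after pairing with $h$. One must also track a possibly nonzero tangential part of $\partial h/\partial s$; it contributes nothing to $\langle\partial h/\partial s,h\rangle$ only because $h$ is normal, so that simplification should be invoked explicitly. A secondary subtlety is making sure that when extracting the Euler--Lagrange equation one is allowed to vary $V$ freely among \emph{all} normal fields (true here, since $\mathcal{H}$ has no constraint), so the fundamental lemma of the calculus of variations applies and the whole normal bracket must vanish pointwise; combined with the self-adjointness of $\Delta^\perp$ from step (4), this yields the stated equations. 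The remaining algebra — expanding $\sum_{\alpha,\beta}H^\alpha\mathrm{tr}(A_\alpha A_\beta)e_\beta$ from the $\sum_i\langle A_V e_i,A_\alpha e_i\rangle$ term by writing $V=e_\beta$ in a frame and using $h=\sum_\alpha H^\alpha e_\alpha$ — is routine bookkeeping.
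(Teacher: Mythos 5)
Your strategy is the same one the paper follows: differentiate $H_s^m\,d\Sigma_s$, use the known first variation of the volume element, reduce the ambient curvature contribution via \eqref{eq:5}, and invoke the fundamental lemma of the calculus of variations; your reduction to normal variations is legitimate on a closed $\Sigma$ (the paper instead keeps a general variation and absorbs the tangential contribution into the divergence ${\rm div}(H^{m}v^{\top})$, which integrates to zero). The substantive difference is that the step you quote as a ``standard identity'' --- the first variation of $h$ (equivalently of $H_s^{2}$) in a general Riemannian ambient, including the curvature term --- is precisely where the paper spends almost all of its effort: it proves the claim \eqref{eq:21_claim} from scratch, commuting $\overline{\nabla}_{v}$ past $\overline{\nabla}_{e_{i}}$, using the Codazzi equation \eqref{eq:Codazzi}, and then computing $\sum_{i}\langle\overline{R}(v^{\perp},e_{i})h,e_{i}\rangle=\langle mh-|T|^{2}h-m\langle N,h\rangle N,v^{\perp}\rangle$ together with ${\rm tr}(A_{h}A_{v^{\perp}})=\sum_{\alpha,\beta}H^{\alpha}{\rm tr}(A_{\alpha}A_{\beta})\langle v^{\perp},e_{\beta}\rangle$. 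In a complete write-up you must either supply that computation or cite a reference valid for an arbitrary ambient space (not just space forms); as written, your steps (2)--(3) still contain placeholders and sign ambiguities, so they are an outline of the key identity rather than a proof of it.

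There is also one concrete point to fix in your steps (4)--(5): after integrating by parts you obtain $\int_{\Sigma}\langle V,\Delta^{\perp}(H^{m-2}h)\rangle\,d\Sigma$, so collecting coefficients of $V$ yields an Euler--Lagrange equation containing $\Delta^{\perp}(H^{m-2}h)$, which for $m>2$ is not literally \eqref{eq:8}, where the factor $H^{m-2}$ sits outside $\Delta^{\perp}h$; the two expressions differ by derivative terms of $H^{m-2}$ wherever $H$ is nonconstant. The paper never moves the Laplacian off $v^{\perp}$: it stops at the integrated identity \eqref{eq:24} and reads off \eqref{eq:25} with $H^{m-2}$ factored out, so your assertion that your computation lands ``precisely'' on \eqref{eq:8} needs either the same weak-form phrasing or an explicit remark reconciling the two forms. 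None of this affects $m=2$, where $H^{m-2}\equiv1$ and your argument does give \eqref{eq:8_2}, which is the only case used in the remainder of the paper.
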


\begin{proof}
	Let us consider a variation of $x$, that is, a smooth map $X:\Sigma^{m}\times(-\varepsilon,\varepsilon)\rightarrow \mathbb{S}^{n}\times\mathbb{R}$ satisfying {that for each} $s\in(-\varepsilon,\varepsilon)$, the map $X_{s}:\Sigma^{m}\rightarrow \mathbb{S}^{n}\times\mathbb{R}$,
	given by $X_{s}(p)=X(p,s)$, is an immersion {and} $X_{0}=x$. Then, we can compute the first variation of $\mathcal{H}$ along $X$, that is,
	\begin{equation}\label{eq:9}
		\dfrac{d}{ds}\mathcal{H}(X_{s})\bigg|_{s=0}=\int_{\Sigma}\dfrac{d}{ds}\left(H^{m}_{s}d\Sigma_{s}\right)\bigg|_{s=0},
	\end{equation}
	where, for each $s\in (-\varepsilon,\varepsilon)$, $H_s=\sqrt{\langle h_s,h_s\rangle}$ stands for the norm of the mean curvature vector of $\Sigma^{m}$ in $\mathbb{S}^{n}\times\mathbb{R}$ with respect to the metric induced by $X_s$ and $d\Sigma_{s}$ denotes its volume element.
	
	On the one hand, let us compute $\dfrac{d}{ds}H^m_s\bigg|_{s=0}$. For the sake of simplicity, let us denote $v=d/ds$. We claim that:
	\begin{equation}\label{eq:21_claim}
		\begin{split}
			\frac{m}{2}v(H_s^{2})\bigg|_{s=0}&=\langle mh-|T|^{2}h-m\langle N,h\rangle N+\sum_{\alpha,\beta}H^{\alpha}{\rm tr}(A_{\alpha} A_{\beta}) e_{\beta},v^{\perp}\rangle\\
			&\quad+\frac{m}{2}v^{\top}(H^{2})+\langle h,\Delta^{\perp}v^{\perp}\rangle.
		\end{split}
	\end{equation}
	
	Let us assume now that $m>2$. Then,
	\begin{equation}\label{eq:ddsHm_aux}
		v(H_s^m)=v((H_s^2)^{\frac{m}{2}})=\frac{m}{2}H_s^{m-2}v(H^2_s)
	\end{equation}
	and, consequently,
	\begin{equation}\label{eq:ddsHm}
		\begin{split}
			v(H_s^m)\bigg|_{s=0}&=H^{m-2}\langle mh-|T|^{2}h-m\langle N,h\rangle N+\sum_{\alpha,\beta}H^{\alpha}{\rm tr}(A_{\alpha} A_{\beta})e_\beta,v^{\perp}\rangle\\
			&\quad+ H^{m-2}\left(\frac{m}{2}v^{\top}(H^{2})+\langle h,\Delta^{\perp}v^{\perp}\rangle \right).
		\end{split}
	\end{equation}
	
	Furthermore, by using~\cite[Lemma $5.4$]{Weiner:78} (see also~\cite[Lemma 4.2]{Colares:97}), we have
	\begin{equation}\label{eq:ddsSs}
		v(d\Sigma_{s})\bigg|_{s=0}=\left(-m\langle h,v^{\perp}\rangle+{\rm div}(v^{\top})\right)d\Sigma.
	\end{equation}
	Therefore, along $\Sigma^m$, $m>2$, it holds
	\begin{equation}\label{eq:22}
		\begin{split}
			v(H_s^{m}d\Sigma_s)\bigg|_{s=0}&=v(H_s^{m})\bigg|_{s=0}d\Sigma+H^{m}v(d\Sigma_s)\bigg|_{s=0}\\
			&=\left\{H^{m-2}\left(\langle mh-|T|^2h-m\langle N,h\rangle N-mH^2h,v^\perp \rangle\right)\right\}d\Sigma\\
			&\quad+\left\{H^{m-2}\left( \sum_{\alpha,\beta} H^\alpha {\rm tr}(A_\alpha A_\beta)\langle e_\beta ,v^\perp \rangle+\langle h,\Delta^{\perp}v^{\perp}\rangle+{\rm div}(H^{m}v^{\top})\right)\right\}d\Sigma,
		\end{split}
	\end{equation}
	where it has been used~\eqref{eq:ddsHm},~\eqref{eq:ddsSs} and the fact that
	\begin{equation}\label{eq:23}
		{\rm div}(H^{m}v^{\top})=\frac{m}{2}H^{m-2}v^{\top}(H^{2})+H^{m}{\rm div}(v^{\top}).
	\end{equation}
	Consequently,
	\begin{equation}\label{eq:24}
		\begin{split}
			\dfrac{d}{ds}\int_{\Sigma}H_{s}^{m}d\Sigma_{s}\bigg|_{s=0}&=\int_{\Sigma}H^{m-2}\langle\Delta^{\perp}v^{\perp},h\rangle d\Sigma+\int_{\Sigma}H^{m-2}\sum_{\alpha,\beta}H^{\alpha}{\rm tr}(A_{\alpha} A_{\beta})\langle e_{\beta},v^{\perp}\rangle d\Sigma\\
			&\quad-\int_{\Sigma}H^{m-2}\left(\langle |T|^{2}h-mh+m\langle N,h\rangle N+mH^{2}h,v^{\perp}\rangle\right) d\Sigma.
		\end{split}
	\end{equation}
	Hence, $x$ is a stationary point of $\mathcal{H}$ if and only if
	\begin{equation}\label{eq:25}
		H^{m-2}\left(\Delta^{\perp}h-|T|^{2}h+mh-m\langle N,h\rangle N-mH^{2}h+\sum_{\alpha,\beta}H^{\alpha}{\rm tr}(A_{\alpha} A_{\beta})e_{\beta}\right)=0.
	\end{equation}
	The case $m=2$ follows with an analogous argument using~\eqref{eq:21_claim} instead of~\eqref{eq:ddsHm}.
	
	It remains to prove the claim. By~\eqref{mean curvature},
	\begin{equation}\label{eq:claim}
		mv(H_s^2)=\sum_i\langle (\overline{\nabla}_vA_{h_s})e_i,e_i\rangle=\sum_{i}\langle\overline{\nabla}_{v}A_{h_s}(e_{i}),e_{i}\rangle-\sum_{i}\langle A_{h_s}(\overline{\nabla}_{v}e_{i})^{\top},e_{i}\rangle,
	\end{equation}
	for any $\{e_1,\ldots,e_m\}$ orthonormal frame of $\mathfrak{X}(\Sigma)$. In particular, given $p\in\Sigma$ we can choose locally a totally geodesic frame, i.e. $(\nabla_{e_i}e_j)(p)=0$ for all $1\leq i,j\leq m$.
	
	Although in the following we will work at $p$, by simplicity we will omit the point. Let us denote
	\begin{equation}\label{eq:10_claim}
		I=\sum_{i}\langle\overline{\nabla}_{v}A_{h_s}(e_{i}),e_{i}\rangle\quad\mbox{and}\quad II=\sum_{i}\langle A_{h_s}(\overline{\nabla}_{v}e_{i})^{\top},e_{i}\rangle
	\end{equation}
	and let us compute both terms separately. From~\eqref{Gauss and Weingarten} and the fact that $[v,e_{i}]=\overline{\nabla}_{v}e_{i}-\overline{\nabla}_{e_{i}}v=0$, we have
	\begin{equation}\label{eq:11_claim}
		\begin{split}
			I&=-\sum_{i}\langle\overline{\nabla}_{v}\overline{\nabla}_{e_{i}}h_s,e_{i}\rangle+\sum_{i}\langle\overline{\nabla}_{v}\nabla^{\perp}_{e_{i}}h_s,e_{i}\rangle\\
			&=\sum_{i}\langle\overline{R}(v,e_{i})h_s,e_{i}\rangle-\sum_{i}\langle\overline{\nabla}_{e_{i}}\overline{\nabla}_{v}h_s,e_{i}\rangle-\sum_{i}\langle\nabla^{\perp}_{e_{i}}h_s,\overline{\nabla}_{v}e_{i}\rangle\\
			&=\sum_{i}\langle\overline{R}(v,e_{i})h_s,e_{i}\rangle-\sum_{i}e_{i}\langle\overline{\nabla}_{v}h_s,e_{i}\rangle+\sum_{i}\langle\overline{\nabla}_{v}h_s,\overline{\nabla}_{e_{i}}e_{i}\rangle-\sum_{i}\langle\nabla^{\perp}_{e_{i}}h_s,\overline{\nabla}_{e_{i}}v\rangle\\
			&=\sum_{i}\langle\overline{R}(v,e_{i})h_s,e_{i}\rangle+\sum_{i}e_{i}\langle h_s,\overline{\nabla}_{v}e_{i}\rangle+\sum_{i}\langle\overline{\nabla}_{v}h_s,\sigma(e_{i},e_{i})\rangle-\sum_{i}\langle\nabla^{\perp}_{e_{i}}h_s,\sigma(e_{i},v^{\top})\rangle\\
			&\quad-\sum_{i}\langle\nabla^{\perp}_{e_{i}}h_s,\nabla^{\perp}_{e_{i}}v^{\perp}\rangle\\
			&=\sum_{i}\langle\overline{R}(v,e_{i})h_s,e_{i}\rangle+\sum_{i}e_{i}\langle h_s,\sigma(e_{i},v^{\top})\rangle+\sum_{i}e_{i}\langle h_s,\nabla^{\perp}_{e_{i}}v^{\perp}\rangle+\dfrac{m}{2}v(H^{2}_s)\\
			&\quad-\sum_{i}\langle\nabla^{\perp}_{e_{i}}h_s,\sigma(v^{\top},e_{i})\rangle-\sum_{i}\langle\nabla^{\perp}_{e_{i}}h_s,\nabla^{\perp}_{e_{i}}v^{\perp}\rangle\\
			&=\sum_{i}\langle\overline{R}(v,e_{i})h_s,e_{i}\rangle+\sum_{i}\langle h_s,\nabla^{\perp}_{e_{i}}\sigma(v^{\top},e_{i})\rangle+\langle h_s,\Delta^{\perp}v^{\perp}\rangle+\dfrac{m}{2}v(H^{2}_s).
		\end{split}
	\end{equation}
	where we have also used~\eqref{mean curvature} and~\eqref{laplacian}.
	
	From the Codazzi equation~\eqref{eq:Codazzi} it holds 
	\begin{equation}\label{eq:13_claim}
		\begin{split}
			\nabla^{\perp}_{e_{i}}\sigma(v^{\top},e_{i})&=(\nabla^{\perp}_{e_{i}}\sigma)(v^{\top},e_{i})+\sigma(e_{i},\nabla_{e_{i}}v^{\top})\\
			&=(\overline{R}(v^{\top},e_{i})e_{i})^{\perp}+(\nabla^{\perp}_{v^{\top}}\sigma)(e_{i},e_{i})+\sigma(e_{i},\nabla_{e_{i}}v^{\top}).
		\end{split}
	\end{equation}
	Inserting~\eqref{eq:13_claim} in~\eqref{eq:11_claim},
	\begin{equation}\label{eq:14_claim}
		\begin{split}
			I&=\sum_{i}\langle\overline{R}(v^{\perp},e_{i})h_s,e_{i}\rangle+\sum_{i}\langle h_s,\nabla^{\perp}_{v^{\top}}\sigma(e_{i},e_{i})+\sigma(e_{i},\nabla_{e_{i}}v^{\top})\rangle+\langle h_s,\Delta^{\perp}v^{\perp}\rangle+\dfrac{m}{2}v(H_s^{2})\\
			&=\sum_{i}\langle\overline{R}(v^{\perp},e_{i})h_s,e_{i}\rangle+\dfrac{m}{2}v^{\top}(H_s^{2})+\sum_{i}\langle A_{h_s}(e_{i}),\nabla_{e_{i}}v^{\top}\rangle+\langle h_s,\Delta^{\perp}v^{\perp}\rangle+\dfrac{m}{2}v(H_s^{2}).
		\end{split}
	\end{equation}
	For the second expression of~\eqref{eq:10_claim}, it is not difficult to check that
	\begin{equation}\label{eq:15_claim}
		\begin{split}
			II&=\sum_{i}\langle A_{h_s}(\overline{\nabla}_{v}e_{i})^\top,e_{i}\rangle\\
			&=\sum_{i}\langle A_{h_s}(\overline{\nabla}_{e_{i}}v^{\top}+\overline{\nabla}_{e_{i}}v^{\perp}),e_{i}\rangle=\sum_{i}\langle A_{h_s}(\nabla_{e_{i}}v^{\top}),e_{i}\rangle-{\rm tr}(A_{h_s} A_{v^{\perp}}).
		\end{split}
	\end{equation}
	Therefore,
	\begin{equation}\label{eq:16_claim}
		\frac{m}{2}v(H_s^{2})=\sum_{i}\langle\overline{R}(v^{\perp},e_{i})h_s,e_{i}\rangle+\dfrac{m}{2}v^{\top}(H_s^{2})+\langle h_s,\Delta^{\perp}v^{\perp}\rangle+{\rm tr}(A_{h_s} A_{v^{\perp}}),
	\end{equation}
	thus
	\begin{equation}\label{eq:16_bis_claim}
		\frac{m}{2}v(H_s^{2})\bigg|_{s=0}=\sum_{i}\langle\overline{R}(v^{\perp},e_{i})h,e_{i}\rangle+\dfrac{m}{2}v^{\top}(H^{2})+\langle h,\Delta^{\perp}v^{\perp}\rangle+{\rm tr}(A_{h}A_{v^{\perp}}),
	\end{equation}
	On the other hand, writing $v^{\perp}=\sum_{\beta}\langle v^{\perp},e_{\beta}\rangle e_{\beta}$ and $h=\sum_{\alpha} H^\alpha e_\alpha$, from~\eqref{mean curvature} we get 
	\begin{equation}\label{eq:18_claim}
		A_{h}=\sum_{\alpha}H^{\alpha}A_{\alpha}\quad\mbox{and}\quad A_{v^{\perp}}=\sum_{\beta}\langle v^{\perp},e_{\beta}\rangle A_{\beta}.
	\end{equation}
	Hence,
	\begin{equation}\label{eq:19_claim}
		{\rm tr}(A_{h} A_{v^{\perp}})=\sum_{\alpha,\beta}H^{\alpha}{\rm tr}(A_{\alpha} A_{\beta})\langle v^{\perp},e_{\beta}\rangle.
	\end{equation}
	Besides this, from~\eqref{eq:5},
	\begin{equation}\label{eq:20_claim}
		\sum_{i=1}^{m}\langle\overline{R}(v^{\perp},e_{i})h,e_{i}\rangle=\langle mh-|T|^{2}h-m\langle N,h\rangle N,v^{\perp}\rangle.
	\end{equation}
	So, the claim is proved by replacing~\eqref{eq:19_claim} and~\eqref{eq:20_claim} into~\eqref{eq:16_bis_claim}.
\end{proof}

It is not difficult to see that minimal submanifolds are stationary points of the total mean curvature functional $\mathcal{H}$. In fact,~\eqref{eq:8} is trivial for minimal submanifolds and~\eqref{eq:8_2} is also satisfied since $H=0$ implies that the mean curvature vector field $h$ vanishes identically at $\Sigma^{m}$. Let us prove that minimal submanifolds are the only stationary points in the class of totally umbilical submanifolds contained in a slice of $\mathbb{S}^n\times\mathbb{R}$. To that end, we need to present first the following auxiliary result.

\begin{lemma}\label{lem:parallel_h}
	If $\Sigma^{m}$ is a totally umbilical submanifold contained in a slice of $\mathbb{S}^{n}\times\mathbb{R}$, then the mean curvature vector field is parallel in the normal bundle.
\end{lemma}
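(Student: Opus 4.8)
The plan is to show directly that $\nabla^\perp h \equiv 0$ by combining the umbilicity hypothesis with the fact that $\Sigma^m$ sits inside a slice. First I would record the two structural consequences of the hypotheses. Since $\Sigma^m$ is totally umbilical, the second fundamental form satisfies $\sigma(X,Y) = \langle X,Y\rangle h$ for all tangent $X,Y$, equivalently $\phi \equiv 0$ in the notation of~\eqref{traceless-tensor}; in particular $A_\xi = \langle \xi,h\rangle I$ for every normal $\xi$. Since $\Sigma^m$ lies in a slice $\mathbb{S}^n\times\{t_0\}$, we have $T \equiv 0$ along $\Sigma$ and hence $N = \partial_t$ is a (globally defined, parallel in the ambient) unit normal field; moreover $\partial_t$ is orthogonal to the slice, so $h$ is actually the mean curvature vector of $\Sigma^m$ inside $\mathbb{S}^n$ and is orthogonal to $N=\partial_t$, giving $\langle N,h\rangle = 0$.

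Next I would feed $\phi \equiv 0$ into the Codazzi equation~\eqref{eq:Codazzi}. Because $T = 0$, the right-hand side of~\eqref{eq:Codazzi} vanishes, so $\sigma$ is a Codazzi tensor: $(\nabla^\perp_Y\sigma)(X,Z) = (\nabla^\perp_X\sigma)(Y,Z)$ for all $X,Y,Z \in \mathfrak{X}(\Sigma)$. Now differentiate the umbilicity relation $\sigma(X,Y) = \langle X,Y\rangle h$ covariantly using~\eqref{derivative condition}: a short computation gives $(\nabla^\perp_X\sigma)(Y,Z) = \langle Y,Z\rangle\, \nabla^\perp_X h$, since the terms involving $\nabla_X Y$ and $\nabla_X Z$ cancel against the derivative of the metric coefficient $\langle Y,Z\rangle$ (the metric is parallel). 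Substituting into the Codazzi symmetry yields
\begin{equation}\label{eq:parallel_h_key}
	\langle Y,Z\rangle\,\nabla^\perp_X h = \langle X,Z\rangle\,\nabla^\perp_Y h \qquad \text{for all } X,Y,Z \in \mathfrak{X}(\Sigma).
\end{equation}
Fixing any unit vector $X = Z = e_1$ and choosing $Y \perp X$ forces $\langle Y, e_1\rangle\,\nabla^\perp_{e_1} h = \nabla^\perp_Y h$, i.e. $\nabla^\perp_Y h = 0$ whenever $Y \perp X$; since $X$ was arbitrary and $m \geq 1$ (the case $m=1$ being trivial as then $h$ is the full $\sigma$ and one argues directly, or $m\ge 2$ so there is always such a $Y$), one deduces $\nabla^\perp_Y h = 0$ for all tangent $Y$, which is exactly the assertion that $h$ is parallel in the normal bundle.

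I expect the only mild subtlety to be the bookkeeping in deriving $(\nabla^\perp_X\sigma)(Y,Z) = \langle Y,Z\rangle\,\nabla^\perp_X h$ cleanly from~\eqref{derivative condition} — one must be careful that $\nabla^\perp_X(\langle Y,Z\rangle h) = X(\langle Y,Z\rangle)\, h + \langle Y,Z\rangle\,\nabla^\perp_X h$ and that $X(\langle Y,Z\rangle) = \langle \nabla_X Y, Z\rangle + \langle Y, \nabla_X Z\rangle$, so the extra terms $-\sigma(\nabla_X Y,Z) - \sigma(Y,\nabla_X Z) = -\langle \nabla_X Y,Z\rangle h - \langle Y,\nabla_X Z\rangle h$ exactly cancel the first term. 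Everything else is linear algebra. As an alternative to the Codazzi route, one could instead invoke Lemma~\ref{lem:parallel_h}'s setting to note that a totally umbilical submanifold of the slice $\mathbb{S}^n$ is either totally geodesic or an open piece of a small/great sphere, for which $h$ is manifestly parallel; but the Codazzi argument above is self-contained and works uniformly, so that is the route I would present.
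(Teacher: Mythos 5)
Your proposal is correct and follows essentially the same route as the paper: substitute the umbilicity relation $\sigma(X,Y)=\langle X,Y\rangle h$ into the Codazzi equation~\eqref{eq:Codazzi}, use $T=0$ to kill the curvature term, obtain $\langle Y,Z\rangle\,\nabla^{\perp}_{X}h=\langle X,Z\rangle\,\nabla^{\perp}_{Y}h$, and conclude by choosing suitable orthogonal vectors. The only differences are cosmetic (your explicit verification of the cancellation in $(\nabla^{\perp}_{X}\sigma)(Y,Z)=\langle Y,Z\rangle\nabla^{\perp}_{X}h$ and a slightly different choice of test vectors at the end), so no further comment is needed.
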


\begin{proof}
	From umbilicity, \eqref{traceless-tensor} gives
	\begin{equation}\label{eq:28}
		\sigma(X,Y)=\langle X,Y\rangle h,\qquad X,Y\in\mathfrak{X}(\Sigma).
	\end{equation}
	Hence, a direct computation from the Codazzi equation~\eqref{eq:Codazzi} yields
	\begin{equation}\label{eq:29}
		\left(\langle Y,Z\rangle\langle X,T\rangle-\langle X,Z\rangle\langle Y,T\rangle\right)N=\langle X,Z\rangle\nabla^{\perp}_{Y}h-\langle Y,Z\rangle\nabla^{\perp}_{X}h,
	\end{equation}
	for all $X,Y,Z\in\mathfrak{X}(\Sigma)$. Since $\Sigma^m$ is contained in a slice, $T=0$, so from~\eqref{eq:29}
	\begin{equation}\label{eq:30}
		\langle Y,Z\rangle\nabla^{\perp}_{X}h=\langle X,Z\rangle\nabla^{\perp}_{Y}h,
	\end{equation}
	for all $X,Y,Z\in\mathfrak{X}(\Sigma)$. Therefore, choosing $Y=Z$ orthogonal to $X$, we conclude that $h$ is parallel in the normal bundle.
\end{proof}

As a consequence of the previous result, we get the following corollary.

\begin{corollary}\label{cor:tu_tg}
	Let $\Sigma^m$ be a totally umbilical submanifold contained in a slice of $\mathbb{S}^{n}\times\mathbb{R}$. Then, $\Sigma^m$ is an $\mathcal{H}$-submanifold of $\mathbb{S}^n\times\mathbb{R}$ if and only if it is totally geodesic.
\end{corollary}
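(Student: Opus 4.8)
The plan is to feed the geometric restrictions coming from the hypothesis into the Euler--Lagrange equations of Proposition~\ref{prop:2} and show that they force $h\equiv0$, so that umbilicity upgrades to total geodesy.

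First I would record the consequences of $\Sigma^{m}$ lying in a slice $\mathbb{S}^{n}\times\{t_{0}\}$. One has $T\equiv0$, hence $N=\partial_{t}$. Moreover, for tangent fields $X,Y$ on $\Sigma^{m}$,
$\langle\overline{\nabla}_{X}Y,\partial_{t}\rangle=X\langle Y,\partial_{t}\rangle-\langle Y,\overline{\nabla}_{X}\partial_{t}\rangle=0$
by~\eqref{eq:2}, since $Y$ is tangent to the slice; tracing over an orthonormal frame gives $\langle N,h\rangle=\langle\partial_{t},h\rangle=0$. Finally, by Lemma~\ref{lem:parallel_h}, $h$ is parallel in the normal bundle, so $\Delta^{\perp}h=0$.

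Next I would use umbilicity. Since $\phi\equiv0$, \eqref{traceless-tensor} gives $\sigma(X,Y)=\langle X,Y\rangle h$, whence $\langle A_{\xi}X,Y\rangle=\langle\sigma(X,Y),\xi\rangle=\langle h,\xi\rangle\langle X,Y\rangle$ and therefore $A_{\xi}=\langle h,\xi\rangle I$ for every normal $\xi$; in particular $A_{\alpha}=H^{\alpha}I$, so ${\rm tr}(A_{\alpha}A_{\beta})=mH^{\alpha}H^{\beta}$ and
\begin{equation*}
	\sum_{\alpha,\beta}H^{\alpha}\,{\rm tr}(A_{\alpha}A_{\beta})\,e_{\beta}=m\Big(\sum_{\alpha}(H^{\alpha})^{2}\Big)\sum_{\beta}H^{\beta}e_{\beta}=mH^{2}h.
\end{equation*}
Substituting $T\equiv0$, $\langle N,h\rangle=0$, $\Delta^{\perp}h=0$ and this identity into the Euler--Lagrange equations: for $m=2$, equation~\eqref{eq:8_2} collapses to $(2-2H^{2})h+2H^{2}h=2h=0$, so $h\equiv0$; for $m>2$, equation~\eqref{eq:8} collapses to $mH^{m-2}h=0$, and taking norms yields $H^{m-1}\equiv0$, hence $H\equiv0$ and again $h\equiv0$. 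In either case $\Sigma^{m}$ is minimal, and then $\sigma(X,Y)=\langle X,Y\rangle h=0$, i.e.\ $\Sigma^{m}$ is totally geodesic. The converse is immediate: a totally geodesic submanifold has $\sigma\equiv0$, hence $h\equiv0$ and $H\equiv0$, so both~\eqref{eq:8} and~\eqref{eq:8_2} hold trivially, as already observed in the paragraph preceding Lemma~\ref{lem:parallel_h}.

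I do not expect a serious obstacle here. The two points that deserve a little care are the orthogonality $\langle N,h\rangle=0$ (which relies on the slice being totally geodesic in $\mathbb{S}^{n}\times\mathbb{R}$, equivalently on $\overline{\nabla}\partial_{t}=0$) and, when $m>2$, the passage from $mH^{m-2}h=0$ to $H\equiv0$, which is legitimate because $|H^{m-2}h|=H^{m-1}$ with $m-1\geq2$.
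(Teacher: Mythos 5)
Your proposal is correct and follows essentially the same route as the paper: invoke Lemma~\ref{lem:parallel_h} to kill $\Delta^{\perp}h$, use $T\equiv0$ and the product structure to get $\langle N,h\rangle=0$ (the paper gets this from~\eqref{eq:6} plus umbilicity, you from $\overline{\nabla}\partial_{t}=0$ directly, which is the same fact), and then let umbilicity reduce~\eqref{eq:8} and~\eqref{eq:8_2} to $mH^{m-2}h=0$, respectively $2h=0$, forcing minimality and hence total geodesy. No gaps.
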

\begin{proof}
	Let $\Sigma^m$ be a submanifold of $\mathbb{S}^n\times\mathbb{R}$ under the assumptions of the corollary. From Lemma~\ref{lem:parallel_h} it follows that $\nabla^{\perp}h=0$. Furthermore, since $\Sigma^m$ is contained in a slice, $T=0$. Thus, using~\eqref{eq:6} and the assumption of umbilicity, we have
	\begin{equation}
		0= \langle A_{N}(X), Y\rangle=\langle \sigma(X,Y),N \rangle=\langle X,Y\rangle\langle h,N\rangle
	\end{equation}
	for all $X,Y\in\mathfrak{X}(\Sigma)$, so $\langle h,N\rangle=0$. Besides that, the umbilicity of $\Sigma^{m}$ also implies that, for every $m+1\leq\alpha\leq n+1$, it holds $A_{\alpha}(X)=\langle h,e_{\alpha}\rangle X$.
	
	Hence, the first variational formula for $\mathcal{H}$ in Proposition~\ref{prop:2} becomes
	\begin{equation}
		0=H^{m-2}\left(mh-mH^{2}h+m\sum_{\alpha} \langle h,e_{\alpha}\rangle^{2}h\right)=mH^{m-2}h
	\end{equation}
	if $m>2$ and simply $h=0$ in the case $m=2$. In any case, it is immediate to check that $\Sigma^{m}$ is an $\mathcal{H}$-submanifold if and only if it is minimal. Thus, from umbilicity, if and only if it is totally geodesic.
\end{proof}

\section{Two key lemmas}

Associated to the second fundamental form of
$\Sigma^{m}$, let us consider the following operator $P:\mathfrak{X}(\Sigma)\times\mathfrak{X}(\Sigma)\to\mathfrak{X}(\Sigma)^{\perp}$ by setting 
\begin{equation}\label{eq:P}
	P(X,Y)=m\langle X,Y\rangle h-\sigma(X,Y).
\end{equation}
We observe that $P$ is symmetric and ${\rm tr}(P)=m(m-1)h$. Concerning to $P$, let us consider the following second order differential operator:
\begin{equation}\label{eq:31}
	\square^{\ast}:\mathfrak{X}(\Sigma)^{\perp}\to\mathcal{C}^{\infty}(\Sigma)
\end{equation}
given by
\begin{equation}\label{eq:32}
	\square^{\ast}(\xi)=\langle P,\nabla^{2}\xi\rangle,
\end{equation}
where $\langle\,,\rangle$ denotes the Hilbert-Schmidt inner product. We observe that, for each $\alpha\in\{m+1,n+1\}$, by~\eqref{eq:P} it holds
\begin{equation}\label{eq:33}
	\begin{split}
		\langle P(X,Y),e_{\alpha}\rangle&=m\langle X,Y\rangle\langle h,e_{\alpha}\rangle-\langle\sigma(X,Y),e_{\alpha}\rangle\\
		&=m\langle X,Y\rangle H^{\alpha}-\langle A_{\alpha}(X),Y\rangle,
	\end{split}
\end{equation}
which motivates the definition of the operator $P_{\alpha}:\mathfrak{X}(\Sigma)\to\mathfrak{X}(\Sigma)$ given by $P_{\alpha}=mH^{\alpha}I-A_{\alpha}$. It is immediate to see that $P_{\alpha}$ is symmetric, ${\rm tr}(P_{\alpha})=m(m-1)H^{\alpha}$ and
\begin{equation}\label{eq:34}
	\sum_{\alpha}{\rm tr}(P_{\alpha})e_{\alpha}=m(m-1)\sum_{\alpha}H^{\alpha}e_{\alpha}=m(m-1)h={\rm tr}(P).
\end{equation}

We can also define another second differential operator
\begin{equation}\label{eq:35_2}
	\square:\mathcal{C}^{\infty}(\Sigma)\to\mathfrak{X}(\Sigma)^\perp
\end{equation}
such that
\begin{equation}\label{eq:35}
	\square(f)=\sum_{\alpha}{\rm tr}(P_{\alpha}\circ{\rm Hess}\,f)e_{\alpha}.
\end{equation}
The following result gives a relation between both operators $\square^{\ast}$ and $\square$.

\begin{lemma}\label{lem:1}
	Let $\Sigma^{m}$ be a closed submanifold in the product space $\mathbb{S}^{n}\times\mathbb{R}$. Then
	\begin{equation}
		\int_{\Sigma}f\,\square^{\ast}(\xi)d\Sigma=\int_{\Sigma}\langle\square(f),\xi\rangle d\Sigma+(m-1)\int_{\Sigma}\left(f\langle\nabla^{\perp}_{T}\xi,N\rangle-\langle N,\xi\rangle\langle\nabla f,T\rangle\right)d\Sigma,
	\end{equation}
	for all $f\in\mathcal{C}^{2}(\Sigma)$ and $\xi\in T\Sigma^{\perp}$.
\end{lemma}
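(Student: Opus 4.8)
The plan is to prove this as an integration-by-parts identity, exploiting the fact that $\square^{\ast}$ and $\square$ are formally adjoint up to curvature terms coming from the non-triviality of the ambient space. First I would expand the left-hand side using the definition \eqref{eq:32}, writing $\square^{\ast}(\xi)=\langle P,\nabla^{2}\xi\rangle=\sum_{\alpha}\langle P_{\alpha},\mathrm{Hess}\,\langle\xi,e_{\alpha}\rangle\rangle$ plus terms where derivatives hit the frame $\{e_{\alpha}\}$; to avoid bookkeeping I would instead work frame-freely with the second fundamental form and use $P(X,Y)=m\langle X,Y\rangle h-\sigma(X,Y)$, so that $\langle P,\nabla^{2}\xi\rangle=\sum_{i}\langle P(e_{i},e_{i}),\nabla^{\perp}_{e_{i}}\nabla^{\perp}_{e_{i}}\xi\rangle$ evaluated at a point $p$ with a geodesic frame. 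The key move is then to integrate by parts twice: $\int_{\Sigma}f\langle P(e_{i},e_{i}),\nabla^{\perp}_{e_{i}}\nabla^{\perp}_{e_{i}}\xi\rangle$ becomes, after moving the two $e_{i}$-derivatives onto $f$ and onto $P$, a term $\int_{\Sigma}\langle\square(f),\xi\rangle$ (from the Hessian of $f$ paired against $P$) plus a term involving $\langle(\mathrm{div}^{2}P),\xi\rangle\,f$ and cross terms $\langle\nabla^{\perp}P\cdot\nabla f,\xi\rangle$. So the heart of the computation is evaluating the "double divergence" of $P$ in the normal bundle.

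Next I would compute $\sum_{i}\nabla^{\perp}_{e_{i}}\big(P(e_{i},\cdot)\big)$ and its further divergence. Because $\mathrm{tr}(P)=m(m-1)h$ and $P(X,Y)=m\langle X,Y\rangle h-\sigma(X,Y)$, the first divergence of $P$ reduces to a multiple of $\nabla^{\perp}h$ minus $\sum_{i}(\nabla^{\perp}_{e_{i}}\sigma)(e_{i},\cdot)$, and here is where the Codazzi equation \eqref{eq:Codazzi} enters: it converts $(\nabla^{\perp}_{e_{i}}\sigma)(e_{i},X)-(\nabla^{\perp}_{X}\sigma)(e_{i},e_{i})$ into the curvature term $(\langle e_{i},e_{i}\rangle\langle X,T\rangle-\langle X,e_{i}\rangle\langle e_{i},T\rangle)N$, which after summing over $i$ gives $(m-1)\langle X,T\rangle N$. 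Thus the anti-symmetric (in the order of differentiation) part of the double divergence of $P$ produces exactly a term proportional to $(m-1)$ times something built from $T$ and $N$; tracking this carefully is what yields the correction term $(m-1)\int_{\Sigma}\big(f\langle\nabla^{\perp}_{T}\xi,N\rangle-\langle N,\xi\rangle\langle\nabla f,T\rangle\big)d\Sigma$ on the right. I would also need \eqref{eq:6}, $\nabla_{X}T=A_{N}(X)$ and $\nabla^{\perp}_{X}N=-\sigma(T,X)$, to rewrite derivatives of $N$ and $T$ that appear along the way, and the symmetry of $P$ together with $\nabla^{2}\xi$ having symmetric Hessian-type part up to normal curvature.

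Concretely, the cleanest route is: (1) write $\int_{\Sigma}f\,\square^{\ast}(\xi)=\int_{\Sigma}f\langle P,\nabla^{2}\xi\rangle$; (2) integrate by parts once, $=-\int_{\Sigma}\langle\nabla f\otimes(\mathrm{div}\,P)+f\,\mathrm{div}\,P,\ \nabla^{\perp}\xi\rangle$ schematically, being careful that $\mathrm{div}$ here is the mixed connection $\nabla$ on the $\mathfrak X(\Sigma)$-slot and $\nabla^{\perp}$ on the normal slot; (3) integrate by parts a second time on the $f\,\mathrm{div}\,P$ term; (4) collect the $\mathrm{Hess}\,f$ contribution as $\langle\square(f),\xi\rangle$; (5) use Codazzi \eqref{eq:Codazzi} and \eqref{eq:6} to identify all remaining terms, which should collapse precisely into the stated $(m-1)$-boundary-type correction, with the genuine boundary terms vanishing because $\Sigma$ is closed. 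The main obstacle I expect is step (5): organizing the several curvature and cross terms — those from $\nabla^{\perp}N=-\sigma(T,\cdot)$, from $\nabla T=A_{N}$, and from the non-symmetry of $\nabla^{\perp}\nabla^{\perp}\xi$ (the normal curvature operator acting on $\xi$) — and checking that everything except the two displayed terms cancels. A subtlety to watch is that $\nabla^{2}\xi$ in \eqref{eq:32} is the full Hessian (including the $-\sigma(\cdot,\cdot)$-type corrections from the mixed connection on the $\mathfrak X(\Sigma)$-slot), so the pairing $\langle P,\nabla^{2}\xi\rangle$ already contains terms linear in $\sigma$ that must be matched against the definition of $\square$; I would verify at the start that $\langle P,\nabla^{2}\xi\rangle=\sum_{\alpha}\langle P_{\alpha},\mathrm{Hess}\langle\xi,e_{\alpha}\rangle\rangle$ up to exactly the terms that feed the $N$-correction, so that no hidden contribution is lost.
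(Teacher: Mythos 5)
Your plan is essentially the paper's own proof: expand $f\,\square^{\ast}(\xi)=f\langle P,\nabla^{2}\xi\rangle$ in a frame geodesic at a point, move both derivatives off $\xi$ onto $f$ and $P$ (the paper does this by isolating the transfer terms as divergences and invoking the divergence theorem on the closed $\Sigma$), read off $\langle\square(f),\xi\rangle$ from the $\mathrm{Hess}\,f$-against-$P$ term, and compute the divergence of $P$ by the Codazzi equation \eqref{eq:Codazzi}, whose curvature term summed over the frame gives exactly $\sum_{j}\nabla^{\perp}_{e_{j}}P(X,e_{j})=-(m-1)\langle X,T\rangle N$ and hence the two $(m-1)$-corrections $f\langle\nabla^{\perp}_{T}\xi,N\rangle$ and $-\langle N,\xi\rangle\langle\nabla f,T\rangle$; your identification of this cancellation ($m\nabla^{\perp}h$ against the trace of $\nabla^{\perp}\sigma$, leaving only the $T$--$N$ term) is precisely the heart of the paper's argument. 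Two corrections to your write-up, though. First, the displayed identity $\langle P,\nabla^{2}\xi\rangle=\sum_{i}\langle P(e_{i},e_{i}),\nabla^{\perp}_{e_{i}}\nabla^{\perp}_{e_{i}}\xi\rangle$ is wrong as written: the Hilbert--Schmidt pairing is the full double sum $\sum_{i,j}\langle P(e_{i},e_{j}),\nabla^{2}\xi(e_{i},e_{j})\rangle$, and keeping only the diagonal drops the off-diagonal components of $\sigma$, so the terms $\mathrm{tr}(P_{\alpha}\circ\mathrm{Hess}\,f)$ that constitute $\square(f)$ would never assemble; since you state the correct pairing just before, treat this as a slip, but the computation must be run with the double sum. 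Second, several complications you anticipate do not actually occur: no derivatives of $T$ or $N$ arise, so \eqref{eq:6} is not needed; the failure of $\nabla^{\perp}_{e_{i}}\nabla^{\perp}_{e_{j}}\xi$ to be symmetric is harmless because the normal curvature term is antisymmetric in $i,j$ and pairs to zero against the symmetric $P$; and $\nabla^{2}\xi$ carries no hidden $\sigma$-corrections (its tangential slot uses the intrinsic $\nabla$, which vanishes on a geodesic frame at the point). With those adjustments your step (5) collapses exactly as in the paper.
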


\begin{proof}
	Let $p\in\Sigma^{m}$ and $\{e_1,\ldots,e_m\}$ be an orthonormal frame of $\mathfrak{X}(\Sigma)$ on a neighborhood $U\subset\Sigma^{m}$ of $p$, geodesic at $p$, that is, $(\nabla_{e_{i}}e_{j})(p)=0$ for all $1\leq i,j\leq m$. By using the Hilbert-Schmidt inner product, we have
	\begin{equation}\label{eq:36}
		\begin{split}
			f\square^{\ast}(\xi)=f\langle P,\nabla^{2}\xi\rangle&=f\sum_{i,j}\langle P(e_{i},e_{j}),\nabla^{2}\xi(e_{i},e_{j})\rangle\\
			&=\sum_{i,j}e_{j}\left(f\langle P(e_{i},e_{j}),\nabla^{\perp}_{e_{i}}\xi\rangle\right)-\sum_{i,j}e_{i}\left(e_{j}(f)\langle P(e_{i},e_{j}),\xi\rangle\right)\\
			&\quad-f\sum_{i,j}\langle\nabla^{\perp}_{e_{j}}P(e_{i},e_{j}),\nabla^{\perp}_{e_{i}}\xi\rangle+\sum_{i,j}e_{i}(e_{j}(f))\langle P(e_{i},e_{j}),\xi\rangle\\
			&\qquad\quad+\sum_{i,j}e_{j}(f)\langle\nabla^{\perp}_{e_{i}}P(e_{i},e_{j}),\xi\rangle.
		\end{split}
	\end{equation}
	On the other hand, by a direct computation
	\begin{equation}\label{eq:37}
		\begin{split}
			\sum_{i,j}e_{i}(e_{j}(f))\langle P(e_{i},e_{j}),\xi\rangle&=m\sum_{i,j}e_{i}(e_{j}(f))\delta_{ij}\langle h,\xi\rangle-\sum_{i,j}e_{i}(e_{j}(f))\langle\sigma(e_{i},e_{j}),\xi\rangle\\
			&=m\Delta f\langle h,\xi\rangle-\sum_{\alpha,i,j}e_{i}(e_{j}(f))\langle A_{\alpha}(e_{i}),e_{j}\rangle\langle e_{\alpha},\xi\rangle\\
			&=\sum_{\alpha}\left(mH^{\alpha}\Delta f-{\rm tr}(A_{\alpha}\circ{\rm Hess}\,f)\right)\langle e_{\alpha},\xi\rangle\\
			&=\sum_{\alpha}{\rm tr}(P_{\alpha}\circ{\rm Hess}\,f)\langle e_{\alpha},\xi\rangle=\langle\square(f),\xi\rangle,
		\end{split}
	\end{equation}
	where $\delta_{ij}=\langle e_i,e_j\rangle$. Inserting~\eqref{eq:37} in~\eqref{eq:36} we get
	\begin{equation}\label{eq:38}
		\begin{split}
			f\square^{\ast}(\xi)&=\langle\square(f),\xi\rangle-f\sum_{i,j}\langle\nabla^{\perp}_{e_{j}}P(e_{i},e_{j}),\nabla^{\perp}_{e_{i}}\xi\rangle+\sum_{i,j}e_{j}(f)\langle\nabla^{\perp}_{e_{i}}P(e_{i},e_{j}),\xi\rangle\\
			&\qquad+\sum_{i,j}e_{j}\left(f\langle P(e_{i},e_{j}),\nabla^{\perp}_{e_{i}}\xi\rangle\right)-\sum_{i,j}e_{i}\left(e_{j}(f)\langle P(e_{i},e_{j}),\xi\rangle\right).
		\end{split}
	\end{equation}
	
	We observe that the last expressions in~\eqref{eq:38} can be seen as divergences, that is,
	\begin{equation}\label{eq:39}
		\sum_{i,j}{\rm div}\left(e_{j}(f)\langle P(e_{i},e_{j}),\xi\rangle e_{i}\right)=\sum_{i,j}e_{j}(f)\langle P(e_{i},e_{j}),\xi\rangle{\rm div}(e_{i})+\sum_{i,j}e_{i}\left(e_{j}(f)\langle P(e_{i},e_{j}),\xi\rangle\right)
	\end{equation}
	and
	\begin{equation}\label{eq:40}
		\begin{split}
			\sum_{i,j}{\rm div}\left(f\langle P(e_{i},e_{j}),\nabla^{\perp}_{e_{i}}\xi\rangle e_{j}\right)=f\sum_{i,j}\langle P(e_{i},e_{j}),\nabla^{\perp}_{e_{i}}\xi\rangle{\rm div}(e_{j})+\sum_{i,j}e_{j}\left(f\langle P(e_{i},e_{j}),\nabla^{\perp}_{e_{i}}\xi\rangle\right).
		\end{split}
	\end{equation}
	Since at $p\in\Sigma^{m}$ it holds ${\rm div}(e_{i})(p)=0$ for any $1\leq i \leq m$, we obtain
	\begin{equation}\label{eq:41}
		\begin{split}
			\sum_{i,j}{\rm div}&\left(f\langle P(e_{i},e_{j}),\nabla^{\perp}_{e_{i}}\xi\rangle e_{j}-e_{j}(f)\langle P(e_{i},e_{j}),\xi\rangle e_{i}\right)\\
			&=\sum_{i,j}e_{j}\left(f\langle P(e_{i},e_{j}),\nabla^{\perp}_{e_{i}}\xi\rangle\right)-\sum_{i,j}e_{i}\left(e_{j}(f)\langle P(e_{i},e_{j}),\xi\rangle\right).
		\end{split}
	\end{equation}
	Now, by using the Codazzi equation~\eqref{eq:Codazzi},
	\begin{equation}
		\begin{split}
			\langle\nabla^{\perp}_{e_{j}}P(e_{i},e_{j}),\nabla^{\perp}_{e_{i}}\xi\rangle&=m\delta_{ij}\langle\nabla^{\perp}_{e_{j}}h,\nabla^{\perp}_{e_{i}}\xi\rangle-\langle\nabla^{\perp}_{e_{i}}\sigma(e_{j},e_{j}),\nabla^{\perp}_{e_{i}}\xi\rangle+\langle\overline{R}(e_{i},e_{j})\nabla^{\perp}_{e_{i}}\xi,e_{j}\rangle\\
			&=m\delta_{ij}\langle\nabla^{\perp}_{e_{j}}h,\nabla^{\perp}_{e_{i}}\xi\rangle-\langle\nabla^{\perp}_{e_{i}}\sigma(e_{j},e_{j}),\nabla^{\perp}_{e_{i}}\xi\rangle\\
			&\quad+\langle\nabla^{\perp}_{e_{i}}\xi,N\rangle\left(\langle e_{j},T\rangle\delta_{ij}-\langle e_{i},T\rangle\right),
		\end{split}
	\end{equation}
	and hence
	\begin{equation}\label{eq:42}
		f\sum_{i,j}\langle\nabla^{\perp}_{e_{j}}P(e_{i},e_{j}),\nabla^{\perp}_{e_{i}}\xi\rangle=-(m-1)f\langle\nabla^{\perp}_{T}\xi,N\rangle.
	\end{equation}
	In a similar way,
	\begin{equation}\label{eq:43}
		\sum_{i,j}e_{j}(f)\langle\nabla^{\perp}_{e_{i}}P(e_{i},e_{j}),\xi\rangle=-(m-1)\langle N,\xi\rangle\langle\nabla f,T\rangle.
	\end{equation}
	Replacing~\eqref{eq:41}, \eqref{eq:42} and~\eqref{eq:43} all of these in~\eqref{eq:38},
	\begin{equation}\label{eq:44}
		\begin{split}
			f\square^{\ast}(\xi)&=\langle\square(f),\xi\rangle+\sum_{i,j}{\rm div}\left(f\langle P(e_{i},e_{j}),\nabla^{\perp}_{e_{i}}\xi\rangle e_{j}-e_{j}(f)\langle P(e_{i},e_{j}),\xi\rangle e_{i}\right)\\
			&\qquad+(m-1)\left(f\langle\nabla^{\perp}_{T}\xi,N\rangle-\langle N,\xi\rangle\langle\nabla f,T\rangle\right).
		\end{split}
	\end{equation}
	It is worth pointing our that the expression in the divergence term is independent of the chosen frame. Finally, by using the divergence theorem, we obtain the desired result.
\end{proof}

In particular, taking $f\equiv1$ in Lemma~\ref{lem:1}, we get

\begin{corollary}\label{cor:1}
	Let $\Sigma^{m}$ be a closed submanifold in the product space $\mathbb{S}^{n}\times\mathbb{R}$. Then, for all $\xi\in \mathfrak{X}(\Sigma)^{\perp}$
	\begin{equation}
		\int_{\Sigma}\square^{\ast}(\xi)d\Sigma=(m-1)\int_{\Sigma}\langle\nabla^{\perp}_{T}\xi,N\rangle d\Sigma.
	\end{equation}
\end{corollary}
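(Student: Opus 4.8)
The plan is to obtain Corollary~\ref{cor:1} as the immediate specialization of Lemma~\ref{lem:1} to the constant function $f\equiv 1$. First I would observe that the Hessian of a constant function vanishes identically, so by the definition~\eqref{eq:35} of $\square$ we get $\square(1)=\sum_{\alpha}{\rm tr}(P_{\alpha}\circ{\rm Hess}\,1)e_{\alpha}=0$; in particular $\langle\square(1),\xi\rangle=0$ for every $\xi\in\mathfrak{X}(\Sigma)^{\perp}$. Likewise $\nabla 1=0$, so the term $\langle N,\xi\rangle\langle\nabla 1,T\rangle$ occurring on the right-hand side of Lemma~\ref{lem:1} is also zero. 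Substituting $f\equiv 1$ into the identity of Lemma~\ref{lem:1} and discarding these two vanishing contributions, the right-hand side collapses to $(m-1)\int_{\Sigma}\langle\nabla^{\perp}_{T}\xi,N\rangle\,d\Sigma$, while the left-hand side is exactly $\int_{\Sigma}1\cdot\square^{\ast}(\xi)\,d\Sigma=\int_{\Sigma}\square^{\ast}(\xi)\,d\Sigma$, which is the claimed equality.

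There is essentially no obstacle here, since all the analytic work — the integration by parts producing the frame-independent divergence term~\eqref{eq:41} and the use of the Codazzi equation to simplify~\eqref{eq:42}--\eqref{eq:43} — has already been carried out in the proof of Lemma~\ref{lem:1}; the only point worth recording is that $\Sigma^{m}$ is closed, so that the divergence theorem applies and the divergence term integrates to zero, which is what makes the substitution $f\equiv 1$ meaningful. (Alternatively, one could prove the corollary directly by integrating $\square^{\ast}(\xi)=\langle P,\nabla^{2}\xi\rangle$ by parts twice and invoking the Codazzi equation together with ${\rm tr}(P)=m(m-1)h$, but routing the computation through Lemma~\ref{lem:1} is cleaner and avoids repeating that argument.)
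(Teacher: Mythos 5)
Your proposal is correct and follows exactly the paper's route: the corollary is obtained by substituting $f\equiv 1$ into Lemma~\ref{lem:1}, noting that ${\rm Hess}\,1=0$ kills $\langle\square(1),\xi\rangle$ and $\nabla 1=0$ kills the term $\langle N,\xi\rangle\langle\nabla f,T\rangle$. Your additional remark about closedness being what allows the divergence term in the lemma's proof to vanish is accurate but already absorbed into Lemma~\ref{lem:1} itself, so nothing further is needed.
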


The next result gives a Hiusken type inequality for submanifolds in $\mathbb{S}^n\times\mathbb{R}$.
\begin{lemma}\label{lem:2}
	If $\Sigma^{m}$ is a submanifold in the product space $\mathbb{S}^{n}\times\mathbb{R}$, then
	\begin{equation}\label{Hiusken-type}
		|\nabla^{\perp}\sigma|^{2}\geq\dfrac{m}{m+2}\left(3m|\nabla^{\perp}h|^{2}+4(m-1)\langle\nabla^{\perp}_{T}h,N\rangle\right).
	\end{equation}
\end{lemma}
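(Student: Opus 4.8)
The inequality is of Huisken/Kato type: one must control the full covariant derivative $\nabla^\perp\sigma$ from below by the derivative of its trace, $\nabla^\perp h$, with an error term coming from the fact that the ambient space is not a space form — the Codazzi equation~\eqref{eq:Codazzi} has a nonzero right-hand side involving $T$ and $N$. The strategy is the standard one: work pointwise at $p\in\Sigma$ with an orthonormal frame $\{e_i\}$ geodesic at $p$, write $\sigma_{ijk}:=\langle(\nabla^\perp_{e_k}\sigma)(e_i,e_j),\cdot\rangle$ and decompose the tensor $\nabla^\perp\sigma$ into its totally symmetric part and the remainder governed by Codazzi. Concretely, the plan is to split the index triple: the "diagonal" contribution $\sum_k|\nabla^\perp_{e_k}h|^2$ corresponds (up to the factor $m^2$) to the trace of $\sigma_{ijk}$ over $i,j$, and one compares $\sum_{i,j,k}|\sigma_{ijk}|^2$ against $\sum_k\big|\sum_i \sigma_{iik}\big|^2$.

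First I would record the consequence of Codazzi~\eqref{eq:Codazzi} in index form: $\sigma_{ijk}-\sigma_{ikj}=\big(\delta_{ij}\langle e_k,T\rangle-\delta_{ik}\langle e_j,T\rangle\big)N$, so the antisymmetrization of $\nabla^\perp\sigma$ in its last two slots is explicit and small. This lets me write $\nabla^\perp\sigma = S + E$, where $S$ is totally symmetric in all three indices and $E$ is the correction determined linearly by $T$ and $N$; a direct computation gives $|E|^2$ and the cross term $\langle S,E\rangle$ in terms of $|T|^2$, $|N|^2$ and $\langle\nabla^\perp_T h,N\rangle$ (note $\sum_i\sigma_{iik}=m\nabla^\perp_{e_k}h$, which is how $\nabla^\perp h$ and the term $\langle\nabla^\perp_T h,N\rangle$ enter). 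Then I would apply the purely algebraic inequality for a totally symmetric $3$-tensor $S$ on an $m$-dimensional space,
\begin{equation}\label{eq:symm-tensor-ineq}
	|S|^2 \;\geq\; \frac{3m}{m+2}\,\big|\operatorname{tr}_{12}S\big|^2,
\end{equation}
where $\operatorname{tr}_{12}S$ denotes the once-traced tensor; equivalently, for the traceless part $\mathring S$ of $S$ one has $|\mathring S|^2\ge 0$ and expanding $|S|^2=|\mathring S|^2+\tfrac{3}{m+2}|\operatorname{tr}_{12}S|^2$ (this is exactly the decomposition of symmetric $3$-tensors into irreducible $O(m)$-pieces). Combining this with the exact expressions for $|E|^2$ and $\langle S,E\rangle$, and expressing everything back in terms of $\nabla^\perp h$ (via $\operatorname{tr}_{12}S = \operatorname{tr}_{12}(\nabla^\perp\sigma) - \operatorname{tr}_{12}E = m\nabla^\perp h - \operatorname{tr}_{12}E$), yields~\eqref{Hiusken-type} after collecting terms, using $|T|^2+|N|^2=1$ only where needed, and discarding the manifestly nonnegative $|\mathring S|^2$.

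The main obstacle is purely bookkeeping: getting the constants in~\eqref{eq:symm-tensor-ineq} right and then tracking how the $T,N$-error terms recombine so that the final right-hand side is exactly $\tfrac{m}{m+2}\big(3m|\nabla^\perp h|^2 + 4(m-1)\langle\nabla^\perp_T h,N\rangle\big)$ with no leftover $|T|^2$ or $|N|^2$ contributions of the wrong sign. I expect that the cross term $\langle S,E\rangle$ produces precisely a multiple of $\langle\nabla^\perp_T h,N\rangle$ while the $|E|^2$ term, together with a piece of $|\operatorname{tr}_{12}E|^2$, cancels the unwanted scalar curvature-type quantities — this cancellation is the delicate point and should be verified by a careful index computation, but it is routine once the symmetric/antisymmetric split is in place. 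An alternative, slightly slicker route avoids naming $\mathring S$ altogether: apply the Cauchy–Schwarz–type estimate $\sum_{i,j,k}\sigma_{ijk}^2 \geq \sum_k\big(\tfrac{1}{m}\sum_i\sigma_{iik}\big)^2 \cdot(\text{combinatorial factor})$ directly to the symmetrized tensor and feed in Codazzi; but making the constant sharp still forces one through essentially the same computation, so I would go with the explicit irreducible decomposition above.
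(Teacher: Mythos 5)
Your plan is workable and its mechanics differ from the paper's. The paper proves Lemma~\ref{lem:2} by a one--parameter completing--the--square argument: it sets $F(X,Y,Z)=\nabla^{\perp}_{Z}\sigma(X,Y)+a\left(\langle Y,Z\rangle\nabla^{\perp}_{X}h+\langle X,Z\rangle\nabla^{\perp}_{Y}h+\langle X,Y\rangle\nabla^{\perp}_{Z}h\right)$, computes $|F|^{2}=|\nabla^{\perp}\sigma|^{2}+2a\left(3m|\nabla^{\perp}h|^{2}+2(m-1)\langle\nabla^{\perp}_{T}h,N\rangle\right)+3a^{2}(m+2)|\nabla^{\perp}h|^{2}$ (Codazzi enters only through the traced term), and chooses $a=-m/(m+2)$. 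Your route, splitting $\nabla^{\perp}\sigma=S+E$ with $S$ the total symmetrization and $E$ the Codazzi remainder and then using the irreducible decomposition of symmetric $3$-tensors, does reach the same conclusion and in fact gives slightly more: one finds that the symmetrization of $E$ vanishes, so $\langle S,E\rangle=0$ exactly and the ``delicate cancellation'' you worry about never arises; moreover $|E|^{2}=\tfrac{2(m-1)}{3}|T|^{2}|N|^{2}\geq 0$ and $(\operatorname{tr}_{12}S)(Z)=m\nabla^{\perp}_{Z}h+\tfrac{2(m-1)}{3}\langle Z,T\rangle N$, so $|S|^{2}\geq\tfrac{3}{m+2}|\operatorname{tr}_{12}S|^{2}$ yields~\eqref{Hiusken-type} plus manifestly nonnegative $|T|^{2}|N|^{2}$ terms that you simply discard. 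So your approach buys an explicit remainder (a marginally stronger inequality), at the price of more index bookkeeping than the paper's two-line optimization in $a$.

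Two concrete slips must be fixed. First, your index form of Codazzi has the wrong sign for the paper's conventions: from~\eqref{eq:Codazzi} one gets $\sigma_{ijk}-\sigma_{ikj}=\left(\delta_{ik}\langle e_{j},T\rangle-\delta_{ij}\langle e_{k},T\rangle\right)N$, the opposite of what you wrote. This is not cosmetic: with your sign the cross term in $|\operatorname{tr}_{12}S|^{2}$ comes out as $-4(m-1)\langle\nabla^{\perp}_{T}h,N\rangle$ and you would prove a different inequality than~\eqref{Hiusken-type}, which is later needed with the $+$ sign to combine with Corollary~\ref{cor:1}. Second, your displayed algebraic inequality should read $|S|^{2}\geq\tfrac{3}{m+2}|\operatorname{tr}_{12}S|^{2}$, not $\tfrac{3m}{m+2}$, as your own identity $|S|^{2}=|\mathring{S}|^{2}+\tfrac{3}{m+2}|\operatorname{tr}_{12}S|^{2}$ shows; the extra factor of $m$ only reappears because $\operatorname{tr}_{12}(\nabla^{\perp}\sigma)=m\nabla^{\perp}h$, which is what produces the coefficient $3m^{2}/(m+2)$ in~\eqref{Hiusken-type}.
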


\begin{proof}
	Let $F:\mathfrak{X}(\Sigma)^{3}\to\mathfrak{X}(\Sigma)^{\perp}$ be the tensor defined by
	\begin{equation}\label{eq:45}
		F(X,Y,Z)=\nabla_{Z}^{\perp}\sigma(X,Y)+a\left(\langle Y,Z\rangle\nabla_{X}^{\perp}h+\langle X,Z\rangle\nabla_{Y}^{\perp}h+\langle X,Y\rangle\nabla_{Z}^{\perp}h\right),
	\end{equation}
	for a given $a\in\mathbb{R}$. Let us compute its norm. A direct computation gives
	\begin{equation}\label{eq:46}
		\langle F(X,Y,Z),F(X,Y,Z)\rangle=\langle\nabla_{Z}^{\perp}\sigma(X,Y),\nabla_{Z}^{\perp}\sigma(X,Y)\rangle+2aQ_{1}(X,Y,Z)+a^{2}Q_{2}(X,Y,Z), 
	\end{equation}
	where
	\begin{equation}\label{eq:47}
		\begin{split}
			Q_{1}(X,Y,Z)&=\langle Y,Z\rangle\langle\nabla^{\perp}_{X}h,\nabla^{\perp}_{Z}\sigma(X,Y)\rangle+\langle X,Z\rangle\langle\nabla^{\perp}_{Y}h,\nabla^{\perp}_{Z}\sigma(X,Y)\rangle\\
			&\quad+\langle X,Y\rangle\langle\nabla^{\perp}_{Z}h,\nabla^{\perp}_{Z}\sigma(X,Y)\rangle,
		\end{split}
	\end{equation}
	and
	\begin{equation}\label{eq:48}
		\begin{split}
			Q_{2}(X,Y,Z)&=\left(\langle Y,Z\rangle^{2}\langle\nabla^{\perp}_{X}h,\nabla^{\perp}_{X}h\rangle+\langle X,Z\rangle^{2}\langle\nabla^{\perp}_{Y}h,\nabla^{\perp}_{Y}h\rangle+\langle X,Y\rangle^{2}\langle\nabla^{\perp}_{Z}h,\nabla^{\perp}_{Z}h\rangle\right)\\
			&\quad+2\langle Y,Z\rangle\langle X,Z\rangle\langle\nabla^{\perp}_{X}h,\nabla^{\perp}_{Y}h\rangle+2\langle Y,Z\rangle\langle X,Y\rangle\langle\nabla^{\perp}_{X}h,\nabla^{\perp}_{Z}h\rangle\\
			&\quad+2\langle X,Z\rangle\langle X,Y\rangle\langle\nabla^{\perp}_{Y}h,\nabla^{\perp}_{Z}h\rangle.
		\end{split}
	\end{equation}
	
	Given $p\in \Sigma^{m}$, and $\{e_1,\ldots,e_m\}$ an orthonormal frame of $\mathfrak{X}(\Sigma)$ on a neighbourhood $U\subset \Sigma^{m}$ of $p$, which is geodesic at $p$, it is not difficult to check that
	\begin{equation}\label{eq:49}
		\sum_{i,j,k}\langle\nabla_{e_{k}}^{\perp}\sigma(e_{i},e_{j}),\nabla_{e_{k}}^{\perp}\sigma(e_{i},e_{j})\rangle=|\nabla^{\perp}\sigma|^{2}\quad\mbox{and}\quad\sum_{i,j,k}Q_{2}(e_{i},e_{j},e_{k})=3(m+2)|\nabla^{\perp}h|^{2}.
	\end{equation}
	Besides that, from the Codazzi equation~\eqref{eq:Codazzi} we have
	\begin{equation}\label{eq:50}
		\begin{split}
			\sum_{i,j,k}Q_{1}(e_{i},e_{j},e_{k})&=\sum_{i,j,k}\left(\delta_{jk}\langle\nabla^{\perp}_{e_{i}}h,\nabla^{\perp}_{e_{k}}\sigma(e_{i},e_{j})\rangle+\delta_{ik}\langle\nabla^{\perp}_{e_{j}}h,\nabla^{\perp}_{e_{k}}\sigma(e_{i},e_{j})\rangle+\delta_{ij}\langle\nabla^{\perp}_{e_{k}}h,\nabla^{\perp}_{e_{k}}\sigma(e_{i},e_{j})\rangle\right)\\
			&=3m|\nabla^{\perp}h|^{2}+2(m-1)\sum_{i}\langle e_{i},T\rangle\langle\nabla^{\perp}_{e_{i}}h,N\rangle\\
			&=3m|\nabla^{\perp}h|^{2}+2(m-1)\langle\nabla^{\perp}_{T}h,N\rangle.
		\end{split}
	\end{equation}
	Hence,
	\begin{equation}\label{eq:51}
		|F|^{2}=|\nabla^{\perp}\sigma|^{2}+2a\left(3m|\nabla^{\perp}h|^{2}+2(m-1)\langle\nabla^{\perp}_{T}h,N\rangle\right)+3a^{2}(m+2)|\nabla^{\perp}h|^{2}.
	\end{equation}
	Taking $a=-m/(m+2)$ we obtain~\eqref{Hiusken-type}.
\end{proof}

\section{Proof of Theorem~\ref{teo:1}}

From now on, we will deal with $\mathcal{H}$-surfaces immersed in the product space $\mathbb{S}^{n}\times\mathbb{R}$. Before proving our main result, Theorem~\ref{teo:1}, we need the following auxiliary proposition.

\begin{proposition}\label{prop:3}
	Let $\Sigma^{2}$ be an $\mathcal{H}$-surface in the product space $\mathbb{S}^{n}\times\mathbb{R}$. Then, we have
	\begin{equation}\label{eq:prop}
		\int_{\Sigma}\left(|\nabla^{\perp}\sigma|^{2}+2\sum_{\alpha}{\rm tr}(A_{\alpha}\circ{\rm Hess}\,H^{\alpha})\right)d\Sigma\geq\int_{\Sigma}\left(2\langle N,h\rangle^{2}-(2-|T|^{2}+|\phi|^{2})H^{2}\right)d\Sigma.
	\end{equation}
\end{proposition}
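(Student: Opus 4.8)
The plan is to start from the Simons type formula of Proposition~\ref{prop:1}, specialized to $m=2$, and feed into it the Euler--Lagrange equation~\eqref{eq:8_2} for $\mathcal{H}$-surfaces. First I would integrate the Simons formula over the closed surface $\Sigma^2$, so that the left-hand side $\frac12\int_\Sigma\Delta|\sigma|^2\,d\Sigma$ vanishes by the divergence theorem. This leaves an integral identity whose terms are $|\nabla^\perp\sigma|^2$, the Hessian term $2\sum_\alpha{\rm tr}(A_\alpha\circ{\rm Hess}\,H^\alpha)$, the curvature-type terms $2|\phi_N|^2-4\sum_\alpha|\phi_\alpha(T)|^2+(2-|T|^2)|\phi|^2-2\langle\phi_h(T),T\rangle$, and the purely algebraic terms $\sum_{\alpha,\beta}{\rm tr}(A_\beta){\rm tr}(A_\alpha^2A_\beta)-\sum_{\alpha,\beta}(N(A_\alpha A_\beta-A_\beta A_\alpha)+[{\rm tr}(A_\alpha A_\beta)]^2)$. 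I would then reorganize this identity to isolate $\int_\Sigma\bigl(|\nabla^\perp\sigma|^2+2\sum_\alpha{\rm tr}(A_\alpha\circ{\rm Hess}\,H^\alpha)\bigr)d\Sigma$ on one side.

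The key new input is the $\mathcal{H}$-surface equation. Taking the inner product of~\eqref{eq:8_2} with $h$ and integrating, $\int_\Sigma\langle\Delta^\perp h,h\rangle\,d\Sigma=-\int_\Sigma|\nabla^\perp h|^2\,d\Sigma$, so one obtains
\begin{equation}\label{eq:EL_contracted}
	\int_\Sigma\Bigl(-|\nabla^\perp h|^2+(2-|T|^2-2H^2)H^2-2\langle N,h\rangle^2+\sum_{\alpha,\beta}H^\alpha H^\beta{\rm tr}(A_\alpha A_\beta)\Bigr)d\Sigma=0.
\end{equation}
Here $\sum_{\alpha,\beta}H^\alpha H^\beta{\rm tr}(A_\alpha A_\beta)={\rm tr}(A_h^2)=|A_h|^2$. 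This relation lets me trade the algebraic quartic terms appearing after integrating the Simons formula for curvature and gradient terms. The main work is a bookkeeping exercise: one rewrites the traces $\sum_{\alpha,\beta}{\rm tr}(A_\beta){\rm tr}(A_\alpha^2A_\beta)$ using $H^\beta={\rm tr}(A_\beta)/2$ together with~\eqref{eq:EL_contracted}, and one expresses $|\phi_N|^2$, $\sum_\alpha|\phi_\alpha(T)|^2$ and $\langle\phi_h(T),T\rangle$ in terms of $|T|^2$, $|N|^2=1-|T|^2$, $\langle N,h\rangle$, $H^2$ and $|\phi|^2$ using~\eqref{eq:7} and the definitions of the umbilicity tensors; some of these will only be controlled by inequalities (e.g. $\sum_\alpha|\phi_\alpha(T)|^2\le|\phi|^2|T|^2$ and $|\langle\phi_h(T),T\rangle|\le$ something comparable), which accounts for the inequality rather than equality in~\eqref{eq:prop}. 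Collecting everything and discarding a manifestly nonnegative remainder should leave exactly $\int_\Sigma\bigl(|\nabla^\perp\sigma|^2+2\sum_\alpha{\rm tr}(A_\alpha\circ{\rm Hess}\,H^\alpha)\bigr)\,d\Sigma\ge\int_\Sigma\bigl(2\langle N,h\rangle^2-(2-|T|^2+|\phi|^2)H^2\bigr)\,d\Sigma$.

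The main obstacle I anticipate is the careful handling of the mixed terms $\sum_\alpha|\phi_\alpha(T)|^2$ and $\langle\phi_h(T),T\rangle$ and of the $N$-dependent algebraic quantity $\sum_{\alpha,\beta}N(A_\alpha A_\beta-A_\beta A_\alpha)$: one must decide precisely which of these to estimate and which to keep as equalities so that the final constants in~\eqref{eq:prop} come out as stated, and in particular so that the coefficient of $H^2$ is exactly $2-|T|^2+|\phi|^2$ and the coefficient of $\langle N,h\rangle^2$ is exactly $2$. It may be necessary to also use Lemma~\ref{lem:1} or Corollary~\ref{cor:1} (with a suitable choice of $\xi$, most naturally $\xi=h$) to convert a term of the form $\int_\Sigma\square^\ast(h)\,d\Sigma$ or $\int_\Sigma\langle\square(f),\xi\rangle\,d\Sigma$ appearing from the Hessian term $2\sum_\alpha{\rm tr}(A_\alpha\circ{\rm Hess}\,H^\alpha)$ into boundary-free expressions; keeping track of the $T$- and $N$-contributions from those integrations by parts is where most of the delicacy lies. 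Once the identity is assembled, dropping $|\nabla^\perp h|^2\ge0$ (or absorbing it via Lemma~\ref{lem:2}) and any leftover squares yields the claimed inequality.
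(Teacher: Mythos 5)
Your plan does not match the paper's proof, and as written it has a genuine gap. In the paper, Proposition~\ref{prop:3} is proved \emph{without} the Simons formula: the Hessian term is handled through the identity $\square^{\ast}(2h)=4\langle\Delta^{\perp}h,h\rangle-2\sum_{\alpha}{\rm tr}(A_{\alpha}\circ{\rm Hess}\,H^{\alpha})$ together with Corollary~\ref{cor:1}, which gives $\int_{\Sigma}2\sum_{\alpha}{\rm tr}(A_{\alpha}\circ{\rm Hess}\,H^{\alpha})\,d\Sigma=\int_{\Sigma}\left(4\langle\Delta^{\perp}h,h\rangle-2\langle\nabla^{\perp}_{T}h,N\rangle\right)d\Sigma$; then Lemma~\ref{lem:2} (with $m=2$) absorbs $-3|\nabla^{\perp}h|^{2}-2\langle\nabla^{\perp}_{T}h,N\rangle$ into $|\nabla^{\perp}\sigma|^{2}$, and the Euler--Lagrange equation contracted with $h$, combined with the diagonalization estimate $\sum_{\alpha,\beta}H^{\alpha}H^{\beta}{\rm tr}(\phi_{\alpha}\phi_{\beta})\leq H^{2}|\phi|^{2}$, yields the pointwise bound $\langle\Delta^{\perp}h,h\rangle\geq2\langle N,h\rangle^{2}-(2-|T|^{2}+|\phi|^{2})H^{2}$. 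You do identify the contracted Euler--Lagrange equation and mention Corollary~\ref{cor:1} as a possible ingredient, but the backbone of your argument, integrating Proposition~\ref{prop:1} and finishing by ``bookkeeping'', cannot deliver the inequality.

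Concretely, integrating the Simons formula gives $\int_{\Sigma}\left(|\nabla^{\perp}\sigma|^{2}+2\sum_{\alpha}{\rm tr}(A_{\alpha}\circ{\rm Hess}\,H^{\alpha})\right)d\Sigma=-\int_{\Sigma}\left(2|\phi_{N}|^{2}-4\sum_{\alpha}|\phi_{\alpha}(T)|^{2}+(2-|T|^{2})|\phi|^{2}-2\langle\phi_{h}(T),T\rangle+\dots\right)d\Sigma$, so your route requires showing that minus this right-hand side dominates $\int_{\Sigma}\left(2\langle N,h\rangle^{2}-(2-|T|^{2}+|\phi|^{2})H^{2}\right)d\Sigma$. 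After substituting your integrated Euler--Lagrange relation, what remains to be proved is $\int_{\Sigma}\bigl(\sum_{\alpha,\beta}H^{\alpha}H^{\beta}{\rm tr}(\phi_{\alpha}\phi_{\beta})-|\nabla^{\perp}h|^{2}+2|\phi_{N}|^{2}-4\sum_{\alpha}|\phi_{\alpha}(T)|^{2}+(2-|T|^{2})|\phi|^{2}-2\langle\phi_{h}(T),T\rangle+H^{2}|\phi|^{2}-\sum_{\alpha,\beta}(N(\phi_{\alpha}\phi_{\beta}-\phi_{\beta}\phi_{\alpha})+[{\rm tr}(\phi_{\alpha}\phi_{\beta})]^{2})\bigr)d\Sigma\leq0$. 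Here $(2-|T|^{2})|\phi|^{2}\geq|\phi|^{2}$ enters with the wrong sign and cannot be discarded or controlled by the Cauchy--Schwarz estimates you propose: already for a minimal surface in a slice (an $\mathcal{H}$-surface with $T=0$, $h=0$, $\phi_{N}=0$) the requirement reduces to $\int_{\Sigma}\bigl(2|\sigma|^{2}-\sum_{\alpha,\beta}(N(A_{\alpha}A_{\beta}-A_{\beta}A_{\alpha})+[{\rm tr}(A_{\alpha}A_{\beta})]^{2})\bigr)d\Sigma\leq0$, which is false pointwise when $|\sigma|$ is small and nonzero and is precisely a Simons-type integral inequality whose proof needs the term $\int_{\Sigma}|\nabla^{\perp}\sigma|^{2}\,d\Sigma$ that you have already moved to the other side. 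In short, once the Simons formula is spent to isolate the left-hand side of~\eqref{eq:prop}, there is no ``manifestly nonnegative remainder'' left to discard; the correct mechanism is the one above (Corollary~\ref{cor:1} applied to $\xi=2h$, Lemma~\ref{lem:2}, and the pointwise Euler--Lagrange identity), with Proposition~\ref{prop:1} reserved for the proof of Theorem~\ref{teo:1}, where it is combined with Proposition~\ref{prop:3} rather than used to derive it.
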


\begin{proof}
	Firstly, taking into account the definition of $P$, a direct computation gives us
	\begin{equation}\label{eq:52}
		\begin{split}
			\langle P,\nabla^{2}\xi\rangle&=\sum_{i,j}\langle P(e_{i},e_{j}),\nabla^{2}\xi(e_{i},e_{j})\rangle\\
			&=2\sum_{i,j}\delta_{ij}\langle h,\nabla^{2}\xi(e_{i},e_{j})\rangle-\sum_{i,j}\langle\sigma(e_{i},e_{j}),\nabla^{2}\xi(e_{i},e_{j})\rangle\\
			&=2\langle h,\Delta^{\perp}\xi\rangle-\sum_{i,j}\langle\sigma(e_{i},e_{j}),\nabla^{2}\xi(e_{i},e_{j})\rangle,
		\end{split}
	\end{equation}
	for any orthonormal frame $\{e_1,e_2\}$ of $\mathfrak{X}(\Sigma)$. Furthermore, with a similar reasoning as the one in~\eqref{eq:37}, we get
	\begin{equation}
		\sum_{i,j}\langle\sigma(e_{i},e_{j}),\nabla^{2}\xi(e_{i},e_{j})\rangle=\sum_{\alpha}{\rm tr}(A_{\alpha}\circ{\rm Hess}\,\xi^{\alpha}),
	\end{equation}
	where $\xi^{\alpha}:=\langle\xi,e_{\alpha}\rangle$. Therefore,
	\begin{equation}\label{eq:55}
		\square^{\ast}(\xi)=2\langle h,\Delta^{\perp}\xi\rangle-\sum_{\alpha}{\rm tr}(A_{\alpha}\circ{\rm Hess}\,\xi^{\alpha}).
	\end{equation}
	Making $\xi=2h$ in~\eqref{eq:55}, we write
	\begin{equation}\label{eq:56}
		\square^{\ast}(2h)=4\langle\Delta^{\perp}h,h\rangle-2\sum_{\alpha}{\rm tr}(A_{\alpha}\circ{\rm Hess}\,H^{\alpha})
	\end{equation}
	On the other hand, by using the following identity
	\begin{equation}\label{eq:57}
		\dfrac{1}{2}\Delta H^{2}=\langle\Delta^{\perp}h,h\rangle+|\nabla^{\perp}h|^{2},
	\end{equation}
	\eqref{eq:56} reads
	\begin{equation}\label{eq:58}
		\square^{\ast}(2h)=\langle\Delta^{\perp}h,h\rangle+\dfrac{3}{2}\Delta H^{2}-3|\nabla^{\perp}h|^{2}-2\sum_{\alpha}{\rm tr}(A_{\alpha}\circ{\rm Hess}\,H^{\alpha}).
	\end{equation}
	By using Lemma~\ref{lem:2} in the case $m=2$,
	\begin{equation}\label{eq:59}
		-3|\nabla^{\perp}h|^{2}\geq-|\nabla^{\perp}\sigma|^{2}+2\langle\nabla^{\perp}_{T}h,N\rangle.
	\end{equation}
	Hence,
	\begin{equation}\label{eq:60}
		\square^{\ast}(2h)\geq\langle\Delta^{\perp}h,h\rangle+\dfrac{3}{2}\Delta H^{2}-|\nabla^{\perp}\sigma|^{2}+2\langle\nabla^{\perp}_{T}h,N\rangle-2\sum_{\alpha}{\rm tr}(A_{\alpha}\circ{\rm Hess}\,H^{\alpha}).
	\end{equation}
	
	Let us consider now $\left\{e_3,\ldots,e_{n+1}\right\}$ a normal orthonormal frame in $\mathfrak{X}(\Sigma)^{\perp}$. Then, by writing $h=\sum_{\alpha}H^{\alpha}e_{\alpha}$ and taking into account the definition of $\phi_\alpha$, we easily get
	\begin{equation}\label{eq:27}
		\begin{split}
			\sum_{\alpha,\beta}H^{\alpha}{\rm tr}(A_{\alpha} A_{\beta})\langle e_{\beta},h\rangle&=\sum_{\alpha,\beta,\gamma}H^{\alpha}H^{\gamma}{\rm tr}(A_{\alpha} A_{\beta})\langle e_{\beta},e_{\gamma}\rangle\\
			&=\sum_{\alpha,\beta}H^{\alpha}H^{\beta}{\rm tr}(\phi_{\alpha}\phi_{\beta})+2\sum_{\alpha,\beta}(H^{\alpha})^{2}(H^{\beta})^{2}\\
			&=\sum_{\alpha,\beta}H^{\alpha}H^{\beta}{\rm tr}(\phi_{\alpha}\phi_{\beta})+2H^{4}.
		\end{split}
	\end{equation}
	So, by Proposition~\ref{prop:2},
	\begin{equation}\label{eq:63}
		\langle\Delta^{\perp}h,h\rangle+\left(2-|T|^{2}\right)H^{2}-2\langle N,h\rangle^{2}+\sum_{\alpha,\beta}H^{\alpha}H^{\beta}{\rm tr}(\phi_{\alpha}\phi_{\beta})=0.
	\end{equation}
	Now let us consider $\sigma_{\alpha\beta}={\rm tr}(\phi_{\alpha}\phi_{\beta})$ for all $3\leq \alpha,\beta\leq n+1$. Observe that the $(n-1)\times(n-1)$-matrix $(\sigma_{\alpha\beta})$ is symmetric and it can be assumed to be diagonal for a suitable choice of the normal orthonormal frame $\{e_{3},\ldots,e_{n+1}\}$. Hence,
	\begin{equation}\label{eq:62}
		\sum_{\alpha,\beta}H^{\alpha}H^{\beta}{\rm tr}(\phi_{\alpha}\phi_{\beta})=\sum_{\alpha}(H^{\alpha})^{2}{\rm tr}(\phi_{\alpha}^{2})\leq\sum_{\alpha}(H^{\alpha})^{2}\sum_{\beta}{\rm tr}(\phi_{\beta}^{2})=H^{2}|\phi|^{2}.
	\end{equation}
	Replacing~\eqref{eq:63} and~\eqref{eq:62} in~\eqref{eq:60},
	\begin{equation}\label{eq:64}
		\begin{split}
			\square^{\ast}(2h)-2\langle\nabla^{\perp}_{T}h,N\rangle&\geq-(2-|T|^{2}+|\phi|^{2})H^{2}+2\langle N,h\rangle^{2}+\dfrac{3}{2}\Delta H^{2}\\
			&\quad-|\nabla^{\perp}\sigma|^{2}-2\sum_{\alpha}{\rm tr}(A_{\alpha}\circ{\rm Hess}\,H^{\alpha}).
		\end{split}
	\end{equation}
	Finally, Proposition~\ref{prop:3} is proved taking into account Corollary~\ref{cor:1} and the divergence theorem.
\end{proof}

Now, we are in position to present the proof of Theorem~\ref{teo:1}.

\begin{proof}[\underline{Proof of Theorem~\ref{teo:1}}]
	To begin with, taking into account the definition of $\phi_\alpha$ it is immediate to check that for all $3\leq\alpha,\beta\leq n+1$ it holds
	\begin{equation}\label{eq:66}
		A_{\alpha} A_{\beta}-A_{\beta}A_{\alpha}=\phi_{\alpha}\phi_{\beta}-\phi_{\beta}\phi_{\alpha}.
	\end{equation}
	Furthermore, since for any $3\leq \alpha \leq n+1$ $\phi_\alpha$ is a $2\times 2$ symmetric matrix with ${\rm tr}(\phi_\alpha)=0$, we easily get $\phi_\alpha^2=\lambda I$ for a certain $\lambda\in\mathbb{R}$ and, consequently,
	\begin{equation}\label{eq:trab}
		{\rm tr}(\phi_\alpha^2\phi_\beta)=0
	\end{equation} 
	for all $3\leq \alpha,\beta\leq n+1$.
	
	Besides that, with a straightforward computation and considering~\eqref{eq:trab} we can get the following algebraic identities:
	\begin{equation}\label{eq:65}
		\sum_{\alpha,\beta}{\rm tr}(A_{\beta}){\rm tr}(A^{2}_{\alpha} A_{\beta})=2H^{2}|\phi|^{2}+4H^{4}+4\sum_{\alpha,\beta}H^{\alpha}H^{\beta}{\rm tr}(\phi_{\alpha}\phi_{\beta}),
	\end{equation}
	and
	\begin{equation}\label{eq:67}
		\sum_{\alpha,\beta}[{\rm tr}(A_{\alpha} A_{\beta})]^{2}=\sum_{\alpha,\beta}[{\rm tr}(\phi_{\alpha}\phi_{\beta})]^{2} +4H^{4}+4\sum_{\alpha,\beta}H^{\alpha}H^{\beta}{\rm tr}(\phi_{\alpha}\phi_{\beta}).
	\end{equation}
	Hence, from all the above identities,
	\begin{equation}\label{eq:68}
		\begin{split}
			-\sum_{\alpha,\beta}&\left(N(A_{\alpha} A_{\beta}-A_{\beta} A_{\alpha})+[{\rm tr}(A_{\alpha} A_{\beta})]^{2}-{\rm tr}(A_{\beta}){\rm tr}(A^{2}_{\alpha} A_{\beta})\right)\\
			&=-\sum_{\alpha,\beta}\left(N(\phi_{\alpha}\phi_{\beta}-\phi_{\beta}\phi_{\alpha})+[{\rm tr}(\phi_{\alpha}\phi_{\beta})]^{2}\right)+2H^{2}|\phi|^{2}.
		\end{split}
	\end{equation} 
	So, Proposition~\ref{prop:1} can be written as follow
	\begin{equation}\label{eq:70}
		\begin{split}
			\dfrac{1}{2}\Delta|\sigma|^{2}=&\,\,|\nabla^{\perp}\sigma|^{2}+2\sum_{\alpha}{\rm tr}(A_{\alpha}\circ{\rm Hess}\,H^{\alpha})+2|\phi_{N}|^{2}-4\sum_{\alpha}|\phi_{\alpha}(T)|^{2}\\
			&+\left(2-|T|^{2}+2H^{2}\right)|\phi|^{2}-2\langle\phi_{h}(T),T\rangle\\
			&\qquad-\sum_{\alpha,\beta}\left(N(\phi_{\alpha}\phi_{\beta}-\phi_{\beta}\phi_{\alpha})+[{\rm tr}(\phi_{\alpha}\phi_{\beta})]^{2}\right).
		\end{split}
	\end{equation}
	Observe now that, by using Lemma~\ref{lem:3.4},
	\begin{equation}\label{eq:71}
		\begin{split}
			-\sum_{\alpha,\beta}\left(N(\phi_{\alpha}\phi_{\beta}-\phi_{\beta}\phi_{\alpha})+[{\rm tr}(\phi_{\alpha}\phi_{\beta})]^{2}\right)\geq-\dfrac{3}{2}|\phi|^{4}.
		\end{split}
	\end{equation}
	Moreover, the Cauchy-Schwarz's inequality implies
	\begin{equation}\label{eq:75}
		-4\sum_{\alpha}|\phi_{\alpha}(T)|^{2}\geq-4|\phi|^{2}|T|^{2}\quad\mbox{and}\quad-2\langle\phi_{h}(T),T\rangle\geq-2|\phi_{h}||T|^{2}.
	\end{equation}
	Inserting~\eqref{eq:71} and~\eqref{eq:75} in~\eqref{eq:70} we get
	\begin{equation}\label{eq:72}
		\begin{split}
			\dfrac{1}{2}\Delta|\sigma|^{2}\geq&\,\,|\nabla^{\perp}\sigma|^{2}+2\sum_{\alpha}{\rm tr}(A_{\alpha}\circ{\rm Hess}\,H^{\alpha})+2|\phi_{N}|^{2}-2|\phi_{h}||T|^{2}\\
			&\quad+\left(2-5|T|^{2}+2H^{2}-\dfrac{3}{2}|\phi|^{2}\right)|\phi|^{2}.
		\end{split}
	\end{equation}
	
	Taking integrals and using the divergence theorem, it follows from Proposition~\ref{prop:3} that
	\begin{equation}\label{eq:77}
		\begin{split}
			0\geq\int_{\Sigma}&\left\{2(|\phi_N|^2+\langle N,h\rangle^2)+\left(|T|^2+|\phi|^2\right)H^2\right\}d\Sigma\\
			&+\int_{\Sigma}\left\{\left(2-5|T|^2-\frac{3}{2}|\phi|^2\right)|\phi|^2-2H^2-2|\phi_h||T|^2\right\}d\Sigma.
		\end{split}	
	\end{equation}
	Hence,
	\begin{equation}\label{eq:78}
		\int_{\Sigma}\left\{\left(2-5|T|^2-\frac{3}{2}|\phi|^2\right)|\phi|^2-2H^2-2|\phi_h||T|^2\right\}d\Sigma\leq0.
	\end{equation}
	On the other hand, by the Gauss equation~\eqref{eq:Gauss} it holds
	\begin{equation}\label{eq:79}
		2H^{2}=2K+|\phi|^{2}-2(1-|T|^{2}).
	\end{equation}
	Then, the Gauss-Bonnet theorem implies
	\begin{equation}\label{eq:80}
		\int_{\Sigma}\left\{\left(1-5|T|^2-\dfrac{3}{2}|\phi|^{2}\right)|\phi|^{2}-2(|\phi_{h}|+1)|T|^{2}+2\right\}d\Sigma\leq4\pi\rchi(\Sigma).
	\end{equation}
	
	Finally, let us study when the equality holds in~\eqref{eq:80}. In such case, all the inequalities obtained along the proof should become equalities. In particular, the equality in~\eqref{eq:77} and~\eqref{eq:78} holds. Thus, $|\phi_N|=\langle N,h\rangle=0$ and either $|T|=|\phi|=0$ or $H=0$. In the first case, $\Sigma^{2}$ is an $\mathcal{H}$-surface satisfying the assumptions of Corollary~\ref{cor:tu_tg}, so it is totally geodesic. Therefore, either it is isometric to a slice $\mathbb{S}^2\times\{t_0\}$ in the case $n=2$, or to a totally geodesic sphere $\mathbb{S}^2$ in a certain $\mathbb{S}^3\times\{t_0\}$.
	
	Let us focus in the second case. On the one hand, since $|\phi_{N}|=\langle N,h\rangle=0$,~\eqref{eq:7} implies that $A_{N}=0$. Consequently, from~\eqref{eq:6} we have that $|T|$ is constant on $\Sigma^{2}$, and so it is $|N|$. On the other hand, since $H=0$ and the equality also holds in Lemma~\ref{lem:2}, $\Sigma^{2}$ is necessarily a parallel surface of $\mathbb{S}^{2}\times\mathbb{R}$. Then, the Codazzi equation~\eqref{eq:Codazzi} reads
	\begin{equation}\label{eq:81}
		0=\langle\overline{R}(X,Y)Z,N\rangle=|N|^{2}\left(\langle X,T\rangle\langle Y,Z\rangle-\langle Y,T\rangle\langle X,Z\rangle\right),
	\end{equation}
	for all $X,Y,Z\in\mathfrak{X}(\Sigma)$. Therefore, we easily get that either $N=0$ or $T=0$. In the case where $N=0$, we must have that $\Sigma^{2}$ is a vertical cylinder $\pi^{-1}(\gamma)$, $\gamma$ being a circle in $\mathbb{S}^{2}$ and $\pi:\mathbb{S}^{2}\times\mathbb{R}\rightarrow\mathbb{S}^{2}$ the natural projection map. This case cannot occurs, since it contradicts the compactness assumption of $\Sigma^{2}$. Hence, $T=0$, so $\Sigma^{2}$ is a minimal surface in a slice of $\mathbb{S}^{n}\times\mathbb{R}$. For the case where $\Sigma^{2}$ can be isometrically immersed in a certain $\mathbb{S}^3\times\{t_0\}$, a classical result of isoparametric surfaces in Riemannian space forms~\cite{Lawson:69} guarantees that $\Sigma^{2}$ is isometric to a Clifford torus $\mathbb{S}^{1}(1/\sqrt{2})\times\mathbb{S}^{1}(1/\sqrt{2})$ in $\mathbb{S}^{3}\times\{t_{0}\}$, for some $t_{0}\in\mathbb{R}$. In other case, observe that, again from~\eqref{eq:7}, $|\phi|^2=|\sigma|^2$, so the equality in~\eqref{eq:78} becomes
	\begin{equation}\label{eq:82}
		\int_{\Sigma}|\sigma|^{2}\left(\dfrac{3}{2}|\sigma|^{2}-2\right)d\Sigma=0.
	\end{equation}
	Therefore, from~\cite[Theorem~1]{Li:92} $\Sigma^{2}$ is isometric to a Veronese surface in $\mathbb{S}^{4}\times\{t_{0}\}$, for some $t_{0}\in\mathbb{R}$.
\end{proof}

\section*{Acknowledgements}
The first author is partially supported by the grant PID2021-124157NB-I00, funded by MCIN/ AEI/10.13039/501100011033/ ``ERDF A way of making Europe'' and by the Regional Government of Andalusia FEDER Project 1380930-F. The third author is partially supported by CNPq, Brazil, grants 431976/2018-0 and 311124/2021-6 and Propesqi (UFPE).


\begin{thebibliography}{n}

\bibitem{Albujer:22} A.L. Albujer and F.R. dos Santos,
{\em Willmore surfaces and Hopf tori in homogeneous 3-manifolds},
Annals of Global Analysis and Geometry {\bf62} (2022), 181--200.

\bibitem{Colares:97} J.L.M. Barbosa and A.G. Colares, 
{\em Stability of hypersurfaces with constant $r$-mean curvature}, 
Annals of Global Analysis and Geometry {\bf 15} (1997), 277--297.

\bibitem{Chen:74} B-Y. Chen, 
{\em Some conformal invariants of submanifolds and their applications},
Boll. Un. Mat. Ital. {\bf10} (1974), 380--385.

\bibitem{Chen:71} B-Y. Chen, 
{\em On the total curvature of immersed manifolds I. An inequality of Fenchel-Borsuk-Willmore}, 
Amer. J. Math. {\bf93} (1971), 148--162.

\bibitem{Cheng-Yau's:77} S.Y. Cheng and S.T. Yau,
{\em Hypersurfaces with constant scalar curvature},
Math. Ann. {\bf225} (1977), 195--204.

\bibitem{do Carmo:70} S.S. Chern, M.P. do Carmo and S. Kobayashi, 
{\em Minimal submanifolds of a sphere with second fundamental form of constant length}, 
Functional Analysis and Related Fields (Proc. Conf. for M. Stone, Univ. Chicago, Chicago, Ill., 1968) Springer, New York, 1970, pp. 59--75.

\bibitem{Dajczer:90} M. Dajczer,
{\em Submanifolds and isometric immersions}, 
Mathematics Lecture Series, 13. Publish or Perish, Inc., Houston, TX (1990).

\bibitem{Daniel:07} B. Daniel, 
{\em Isometric immersions into $3$-dimensional homogeneous manifolds}, 
Commentarii Math. Helv. {\bf82} (2007), 87--131.

\bibitem{dos Santos:21} F.R. dos Santos and S.F. da Silva,
{\em On complete submanifolds with parallel normalized mean curvature in product spaces},
Proc. Roy. Soc. Edinburgh Sect. A {\bf152} (2022), 331--355.

\bibitem{dos Santos:21.1} F.R. dos Santos and S.F. da Silva,
{\em A Simons type integral inequality for closed submanifolds in the product space $\mathbb{S}^{n}\times\mathbb{R}$}, 
Nonlinear Anal. {\bf209} (2021), 112366.

\bibitem{dos Santos:22} F.R. dos Santos, S.F. da Silva and A.F. de Sousa,
{\em Integral inequalities for closed linear Weingarten submanifolds in the product spaces}, 
Preprint.

\bibitem{Tribuzy:93} J.H. Eschenburg and R. Tribuzy,
{\em Existence and uniqueness of maps into affine homogeneous spaces}, 
Rend. Sem. Mat. Univ. Padova {\bf89} (1993), 11--18.

\bibitem{Fetcu:13} D. Fetcu and H. Rosenberg,
{\em On complete submanifolds with parallel mean curvature in product spaces},
Rev. Mat. Iberoam. {\bf29} (2013), 1283--1306.

\bibitem{Guo:04} Z. Guo,
{\em Willmore submanifolds in the unit sphere}
Collect. Math. {\bf55} (2004), 279-287.

\bibitem{Guo:16} Z. Guo and B. Yin,
{\em Variational problems of total mean curvature of submanifolds in a sphere},
Proc. Amer. Math. Soc. {\bf144} (2016), 3563--3568.

\bibitem{Lawson:69} H.B. Lawson Jr., 
{\em Local rigidity theorems for minimal hypersurfaces}, 
Ann. of Math. {\bf89} (1969), 187--197.

\bibitem{Li:96} H. Li, 
{\em Hypersurfaces with constant scalar curvature in space forms}, 
Math. Ann. {\bf305} (1996), 665--672.

\bibitem{Li:92} A.M. Li and J.M. Li, 
{\em An intrinsic rigidity theorem for minimal submanifolds in a sphere}, 
Arch. Math. {\bf58} (1992) 582--594.

\bibitem{Marques:14} F.C. Marques and A. Neves, 
{\em Min-max theory and the Willmore conjecture}, 
Ann. of Math. {\bf179} (2014), 683--782.

\bibitem{O'Neill:83} B. O'Neill,
{\em Semi-Riemannian Geometry, with Applications to Relativity},
New York: Academic Press (1983).

\bibitem{Santos:94} W. Santos, 
{\em Submanifolds with parallel mean curvature vector in spheres}, 
Tohoku Math. J. {\bf46} (1994), 403--415.

\bibitem{Simons:68}  J. Simons, 
{\em Minimal varieties in Riemannian manifolds}, 
Ann. Math. {\bf88} (1968), 62--105.

\bibitem{Weiner:78}  J.L. Weiner, 
{\em On the Problem of Chen, Willmore, et al.}, 
Indiana Univ. Math. Jour. {\bf27} (1978), 19--35.

\bibitem{Willmore:65} T.J. Willmore, {\em Note on embedded surfaces}, An. \textcommabelow{S}ti. Univ. ``Al. I. Cuza'' Ia\textcommabelow{s}i Sec\textcommabelow{t}. I a Mat. \textbf{11B} (1965), 493--496.
\end{thebibliography}
\end{document}